\newcommand{\diff}{\mbox{\rm diff}}
\newcommand{\grad}{\nabla}
\newcommand{\laplace}{\Delta}
\renewcommand{\div}{\grad\cdot}
\newcommand{\N}{\mathbf{N}}
\newcommand{\F}{{\mathcal F}}
\newcommand{\R}{\mathbf{R}}
\renewcommand{\S}{\mathbbm{S}}
\newcommand{\g}{\mbox{\sl g}}
\DeclareMathOperator{\spt}{spt}
\DeclareMathOperator{\Hess}{Hess}
\DeclareMathOperator{\hess}{hess}
\DeclareMathOperator*{\argmin}{arg\,min}
\DeclareMathOperator{\gradient}{grad}
\def\loc{{\mathrm{loc}}}
\newcommand{\Ha}{\ensuremath{\mathcal{H}}}
\newcommand{\D}{\ensuremath{\mathcal{D}}}
\renewcommand{\SS}{\ensuremath{\mathcal{S}}}
\newcommand{\T}{\ensuremath{\mathcal{T}}}
\renewcommand{\H}{H^1_{v_*}}
\newcommand{\M}{\mathcal{M}}
\newcommand{\eps}{\varepsilon}
\newtheorem{theorem}{Theorem}
\newtheorem{prop}[theorem]{Proposition}
\newtheorem{lemma}[theorem]{Lemma}
\newcommand{\tacka}{\, \cdot\,}
\begin{document}

\title{The spectrum of a family of fourth-order equations near the global attractor}

\author{Robert J.\ McCann and Christian Seis}

\maketitle


\begin{abstract}
The thin-film and quantum drift diffusion equations belong to a fourth-order
family of evolution equations proposed in \cite{DenzlerMcCann08} 
to be analogous to the (second-order) porous
medium family. They are $2$-Wasserstein gradient ($W_2$) flows of the generalized Fisher information $I(u)$ just as the porous 
medium family was shown to be the $W_2$ gradient flow of the generalized entropy $E(u)$ by Otto \cite{Otto01}. The identity $I(u) = |\nabla_{W_2} E(u)|^2/2$,  allows us to linearize
the fourth-order dynamics around the self-similar solution, which satisfies 
$\nabla_{W_2} E(u_*)=0$ in self-similar variables. We diagonalize this linearization by coupling the 
resulting relation $\Hess_{W_2} I(u_*) =\Hess_{W_2}^2 E(u_*)$ with the 
diagonalization of the analogous linearization $\Hess_{W_2} E(u_*)$ for
the porous medium flow computed in \cite{Seis13}.  This yields information
about the leading- and higher-order asymptotics  of the equation on $\R^N$
which --- outside of special cases \cite{BernoffWitelski02,CarrilloToscani02,MatthesMcCannSavare09}
--- was inaccessible previously.
\end{abstract}


\section{Introduction}

\subsection{Motivation}
In this manuscript, we investigate the long time behavior of nonnegative solutions to a family of nonlinear fourth-order equations in $\R^N$, namely
\begin{equation}\label{1}
\partial_t u + \div\left(u\grad\left(u^{m-3/2}\laplace u^{m-1/2}\right)\right)\;=\; 0
\end{equation}
with exponent $m \ge 1$. This family has two prominent members which are of physical relevance: If $m=3/2$ the above equation is the thin film equation (with linear mobility), which is a model for the capillarity-driven evolution of a viscous thin film over a solid substrate \cite{OronDavisBankoff97,Myers98} and can also be seen as a lubrication approximation of a Hele--Shaw flow \cite{GiacomelliOtto03}.  In this case, the unknown $u(t,x)$ represents the height of the thin film. The other physically relevant case is $m=1$, where one recovers a simplified quantum drift diffusion model \cite{DLSS91a,DLSS91b,JungelPinnau00}. Here, $u(t,x)$ describes the density of electrons in a semiconductor crystal.

\medskip

Solutions to \eqref{1} are expected to feature different phenomena, depending on the nonlinearity exponent. If $m=1$, the equation is a certain fourth-order analog for the heat equation and thus $u$ should become strictly positive instantaneously. On the other hand, if $m>1$, there are compactly supported solutions (``droplets'') which exhibit a slowly propagating contact line $\partial\spt(u)$. In this case, \eqref{1} turns into a free boundary problem for the support of $u$ and the equation shows some similarities to the porous medium equation. It is this similarity to the heat and porous medium equations --- which is not purely phenomenological --- that we will exploit in this paper to study the long time behavior of solutions.

\medskip

The Cauchy problem for the nonlinear fourth-order equation \eqref{1} is still not completely solved. As the equation is only degenerate parabolic, i.e., it is parabolic only in regions where $u>0$, solutions to \eqref{1} will in general not be classical --- at least when $m>1$.  The theory of existence of weak solutions in the thin film case $m=3/2$ was initiated by {\sc Bernis \& Friedman} \cite{BernisFriedman90} and is by now quite complete, see \cite{BertschDalPassoGarckeGrun98,LidiaGiacomelli04,Grun04} and references therein. On the other hand, regularity and uniqueness of solutions are only partially understood, even in the one-dimensional setting. If $m=1$, existence of nonnegative solutions was established in \cite{JungelPinnau00,GianazzaSavareToscani09}.
The first existence results in the complete range $1\le m\le 3/2$ (this is the range in which the generalized Fisher information, see Eq.\ \eqref{3} below, is convex in the ordinary sense) are relatively recent \cite{MatthesMcCannSavare09}, and they are completely unknown for $m>3/2$. The major difficulty in the analysis of \eqref{1} is the lack of a comparison principle for fourth-order equations. 
Still, the weak solutions which have been constructed are known to preserve nonnegativity.

\medskip

In order to discuss existence theory for \eqref{1} in the case $m>1$ properly, one has to be more precise:  Since \eqref{1} is a free boundary problem of fourth order, three conditions are expected to be needed for well-posedness. In addition to the defining condition $u=0$ on $\partial\spt(u)$, we focus on solutions which preserve mass
\[
\int u(t,x)\, dx\;=\; \int u_0(x)\, dx\;=:\; M\quad\mbox{for any }t>0,
\] 
and satisfy
\begin{eqnarray}
\grad\left(u^{2m-2}\right)\cdot \nu&=&0\quad\mbox{on }\partial\spt(u).\label{4}
\end{eqnarray}
Here, $\nu$ is the outer unit normal vector on $\partial\spt(u)$.
Of course, preservation of mass is also valid for sufficiently regular and rapidly decaying (but presumably positive) solutions in the case $m=1$ because \eqref{1} is a continuity equation on the whole space. In the thin film case $m=3/2$, the condition \eqref{4} on the contact line corresponds to the so-called ``complete wetting'' regime, that is, solutions are assumed to have zero contact angle. Prescribing the contact angle means incorporating Young's law into the equation: At triple junctions $\partial\spt(u)$, the droplets attain instantaneously their energetically most favorable shape. Notice, however, that different choices of contact angles may also be of significance, see, e.g., \cite{Otto98}, but \eqref{4} is apparently the simplest one.

\medskip

As already indicated above, the evolution equation \eqref{1} represents a fourth-order analog for the porous medium equation
\begin{equation}\label{7}
\partial_t u - \laplace u^m\;=\;0.
\end{equation}
This equation is best known for modeling the flow of an isentropic gas through a porous medium if $m>1$ and it becomes the ordinary heat equation if $m=1$. We refer to {\sc V\'azquez}'s monograph \cite{Vazquez07} for a recent survey on the analytical treatment of this equation. In fact, both the fourth order equation \eqref{1} and the porous medium equation \eqref{7} can be interpreted as gradient flows for the Wasserstein distance. (Indeed, {\sc Denzler} and the first author initially {\em introduced} \eqref{1} as a Wasserstein gradient flow \cite[Section 4.3]{DenzlerMcCann08}.) In the case of the fourth order equation \eqref{1}, the dissipating functional is the {\em generalized Fisher information}
\begin{equation}\label{3}
\frac1{2m-1}\int |\grad\left(u^{m-1/2}\right)|^2\, dx,
\end{equation}
while in the case of the porous medium equation, the dissipating functional is the {\em entropy}
\[
\int e(u)\, dx,\quad\mbox{where }e(z)=\left\{\begin{array}{ll}z\ln z&\mbox{ if }m=1,\\ \frac1{m-1}z^m&\mbox{ if }m>1,\end{array}\right.
\]
a fact that was established more than a decade ago in {\sc Otto}'s seminal paper \cite{Otto01}. Also the derivation of \eqref{1} was foreshadowed by the work of Otto who discovered this Wasserstein gradient flow structure in the thin film case $m=3/2$ \cite{Otto98,GiacomelliOtto01}. However, besides this structural similarity there are quite a few close links
between \eqref{1} and \eqref{7}, one of which  is via the following identity: 
\[
\frac{d}{dt} \int e(u(t,\tacka))\, dx\;=\; -\left(\frac{2m}{2m-1}\right)^2\int |\grad\left(u(t,\tacka)^{m-1/2}\right)|^2\, dx
\]
for every smooth solution $u$ of \eqref{7}. That is, the generalized Fisher information encodes the limiting dissipation mechanism for the porous medium flow.  A certain rescaled version of this identity, the entropy-information relation, is at the heart of our analysis, cf.\ Subsection \ref{S:min} and \ref{S:Lin}. The gradient flow structure of \eqref{1} will be reviewed in Subsection \ref{S:GF} below.

\medskip

Both equations \eqref{1} and \eqref{7} allow for a family of self-similar solutions. The former, first discovered by {\sc Smyth \& Hill} \cite{SmythHill88} in the particular case $m=3/2$ and $N=1$ and then generalized in \cite{FerreiraBernis97,DenzlerMcCann08}, is given by
\begin{equation}\label{11}
u_*(t,x)=\frac1{t^{N\alpha }}\hat u_*\left(\frac{|x|}{t^{\alpha}}\right),
\end{equation}
where
\[
e'(\hat u_*(r)) = \left\{
\begin{array}{ll} 
\left(\sigma_M-\gamma r^2\right)&\quad\mbox{if }m=1,\\
\left(\sigma_M - \gamma r^2\right)_+&\quad\mbox{if }m>1.
\end{array}\right.\\
\]
Here, $(\tacka)_+=\max\{\tacka,0\}$, and 
\[
\alpha=\frac1{N(2m-2)+4}\quad\mbox{and}\quad\gamma^2=
\frac{\alpha m^2}{2(2m-1)(N(m-1)+1)}.
\]
Moreover, $\sigma_M$ is a real number (positive if $m>1$) that is determined by the mass constraint
\[
\int u_*\, dx\;=\;M.
\]
As such solutions have a delta measure located at the origin as initial data, self-similar solutions are often called {\em source-type solutions}. For the thin film case it is shown that $u_*$ is the only radially symmetric self-similar solution of \eqref{1} that satisfies the regularity condition \eqref{4}, cf.\ \cite{BernisPeletierWilliams92,FerreiraBernis97}; we expect the same to be true also for the general case $m\ge 1$.
We also remark that $u_*$ resembles the Barenblatt solution (except for the specific values of $\alpha$ and $\gamma$), which is well-known to be the self-similar solution of the porous medium equation \cite{ZeldovicKompaneec50,Barenblatt52,Pattle59}. Moreover, just like the Barenblatt solution describes the long-time behavior of any solution to the porous medium equation \cite{Vazquez03}, $u_*$ is conjectured to be the asymptotic limit of any solution to the fourth order equation. More precisely, for any solution of \eqref{1} with total mass $M$ (and satisfying \eqref{4}) we expect that
\begin{equation}\label{11a}
u(t,\tacka)\approx u_*(t, \tacka)\quad\mbox{as }t\gg0.
\end{equation}
This asymptotic behavior, however, is only partially understood. In the case $m=3/2$, {\sc Carrillo \& Toscani} prove this asymptotics for strong solutions in one space dimension and obtain the sharp rate of convergence. In the range $1\le m\le 3/2$, the long-time behavior is established by {\sc Matthes, Savar\'e} and the first author for JKO-type solutions \cite{JordanKinderlehrerOtto98,AGS} of arbitrary dimension.

\medskip

In the present work, we build upon \eqref{11a} and we go one step further: We compute the complete spectrum and the corresponding eigenfunctions of the displacement Hessian operator which is obtained by linearizing a certain rescaled version of \eqref{1} around its global attractor $u_*$. This spectrum was already (formally) computed by {\sc Bernoff \& Witelski} \cite{BernoffWitelski02} in the case of the one-dimensional thin film equation ($m=3/2$, $N=1$). The knowledge of the spectrum and the corresponding eigenfunctions provides information not only on the slowest rates of convergence that saturate the optimal bounds; it also allows us to extract further information on characteristic geometric pathologies to all orders. In particular, we will make precise statements about the role of solutions generated by translations, shears, and dilations of $u_*$. We obtain the precise information on the spectrum and the corresponding eigenfunctions directly from the spectrum of the displacement Hessian of the linearized (confined) porous medium equation via an explicit entropy-information relation which will be derived in Subsection \ref{S:min} below. Roughly speaking, this entropy-information relation provides us with an explicit formula that associates eigenvalues and eigenfunctions of the porous medium equation with those of the fourth-order equations considered in this paper. The former were computed by the second author in \cite{Seis13} using techniques from mathematical physics developed for the spectral analysis of Schr\"odinger operators. An analogous earlier study for the fast diffusion equation (that is \eqref{7} with $m<1$) is due to {\sc Denzler} and the first author \cite{DenzlerMcCann05}. 

\medskip

We finally caution the reader that even though the analysis presented in the second part of this manuscript, that is the computation of the spectrum and the corresponding eigenfunctions, is rigorous, the linearization that gives rise to the displacement Hessian operator is not. In particular, based on the results presented in Theorem~\ref{T1} below we are rather in the position to {\em conjecture} the higher-order asymptotics of solutions to the nonlinear equation \eqref{1}. An argument for how to close this gap is suggested by the work of {\sc Denzler, Koch}, and the first author \cite{DenzlerKochMcCann13} in the context of the fast diffusion equation: The authors find a rigorously controlled linearization of the fast diffusion equation and a similarity transform that relates the operator which appears in this controlled linearization to the displacement Hessian operator from~\cite{DenzlerMcCann05}. As a result, the authors derive higher-order asymptotics for the fast diffusion equation in weighted H\"older spaces.
\medskip

The remainder of the paper is organized as follows: In Subsection \ref{S:res}, we rescale the equation. Subsection \ref{S:GF} comprises the formal gradient flow structure of the resulting dynamics and in Subsection \ref{S:min} we derive the aforementioned entropy-information relation. In Subsection \ref{S:Lin} we finally linearize the rescaled equation around its global attractor. The second section contains our rigorous work. In Subsection \ref{S:Main}, we present our main results. The self-adjointness of the displacement Hessian operator is established in Subsection \ref{S:SA}. Finally, Subsection \ref{S:proof} contains the proof of our main result. The paper concludes with an appendix on the spectral analysis of the Ornstein--Uhlenbeck operator.

%
%
%
%
%
%
%

\subsection{Rescaling}\label{S:res}

In order to study the long time behavior of solutions to \eqref{1}, it is convenient to rescale equation \eqref{1} in such a way that the global attractor becomes a fixed point of the dynamics. That is, we choose to view the dynamics from perspective receding at a rate 
$| x|\sim t^{\alpha}$ inspired by \eqref{11}. We make the general Ansatz
\[
\hat x = \frac{x}{At^{\alpha}},\quad \hat t = \varphi(t), \quad\mbox{and}\quad v = \frac{t^{N\alpha}}{B} u,
\]
to the effect of
\[
 u( t, x)=\frac{B}{t^{N\alpha}}v \left(\varphi(t),\frac{x}{At^{\alpha}}\right),
\]
where $A,\,B>0$ are two constants and $\varphi: (0,\infty)\to \R$ a smooth increasing function that have to be determined later. Under this transformation, \eqref{1} becomes
\begin{equation}\label{10}
t\varphi'(t)\partial_{\hat t}v - \alpha \hat \grad\cdot \left(\hat xv\right) + \frac{B^{2m-2}}{A^4}\hat \grad\cdot\left(v\hat \grad\left(v^{m-3/2}\hat\laplace v^{m-1/2}\right)\right)\;=\;0.
\end{equation}
To fix the constants $A$ and $B$, we compute how the self-similar solution transforms under the given change of variables. A short computation yields
\[
e'(v_*(\hat r))=\left\{
\begin{array}{ll}
\left(\sigma_M - \gamma A^2\hat r^2\right) - \ln B&\mbox{if }m=1,\\
B^{1-m}\left(\sigma_M - \gamma A^2\hat r^2\right)_+&\mbox{if }m>1,
\end{array}
\right.
\]
%
%
and $v_*$ is independent of the time variable $\hat t$ as desired. For notational convenience, we choose $A$ so that $v_*$ is supported on a ball of radius one (in case $m>1$) and adjust the amplitude of $v_*$ by fixing $B$ so that
\begin{equation}\label{10b}
e'(v_*(\hat r)) = \left\{
\begin{array}{ll}
\frac12\left(1- \hat r^2\right) &\mbox{if }m=1,\\
\frac12\left(1 - \hat r^2\right)_+&\mbox{if }m>1.
\end{array}
\right.
\end{equation}
This is achieved by
\[
A =\left\{
\begin{array}{ll}2^{1/4}&\mbox{if }m=1,\\
 \sqrt{\frac{\sigma_M}{\gamma}} &\mbox{if }m>1,
\end{array}
\right.
\quad\mbox{and}\quad B=\left\{
\begin{array}{ll}
\exp\left(\sigma_M-\frac12\right)&\mbox{if }m=1,\\
(2\sigma_M)^{\frac1{m-1}} &\mbox{if }m>1,
\end{array}
\right.
\]
By a slight abuse of language, we will call $v_*$ the {\em Barenblatt profile}. We remark the above choice of the prefactor in the Barenblatt profile is motivated by the particular formula of the global attractor of the confined porous medium equation considered in \cite{Seis13}, the benefit of which we will become apparent in the subsequent analysis.

\medskip

Substituting these values of $A$ and $B$ into \eqref{10} and choosing $\varphi(t) = \alpha \log t$ gives
\begin{equation}\label{8}
\partial_{\hat t}v -  \hat \grad\cdot \left(\hat xv\right) + \theta\hat \grad\cdot\left(v\grad\left(v^{m-3/2}\hat\laplace v^{m-1/2}\right)\right)\;=\;0,
\end{equation}
where
\begin{equation}\label{8a}
\theta =\frac1\alpha\frac{B^{2m-2}}{A^4} = \frac{2m^2}{(2m-1)(N(m-1)+1)}.
\end{equation}
We call equation \eqref{8} the {\em confined} equation, and the parameter $\theta$ can be interpreted as the relative strength of diffusion compared to confinement.

\medskip

Working with the confined equation \eqref{8} instead of \eqref{1} has the advantage that the global attractor is a stationary solution of the equation. This change of perspective proved to be very useful in the study of long-time asymptotics of both the porous medium equation and the thin film equation \cite{CarrilloToscani00,Otto01,DelPinoDolbeault02,CarrilloToscani02}. Also in regard of the gradient flow formulation of the fourth order equation (on which we shed a light in the subsequent subsection), the above rescaling is beneficial as the Barenblatt profile becomes a ground state of the associated dissipating functional, see the discussion in Subsection \ref{S:min}.
\medskip

We finally remark that under the above rescaling, the initial data of \eqref{1} and \eqref{8} are incompatible. More precisely, the initial datum $v(0,\tacka)$ corresponds to $u(1,\tacka)$. However, as we are interested in the long-time behavior of solutions, this drawback is merely of aesthetic nature.

\medskip

The remainder of this paper is exclusively devoted to the analysis of the confined equation. Hence, to simplify the notation we drop from here on the hats from the time and space variables.

\subsection{Formal gradient flow interpretation}\label{S:GF}
In this subsection, we discuss the formal gradient flow interpretation of \eqref{8}. Even though the following is purely formal, we proceed carefully for the sake of a better understanding of the dynamics. A dynamical system is said to have a gradient flow structure if the evolution follows the steepest descent in an energy landscape, i.e.,
\[
\partial_t v + \gradient I(v)\;=\;0.
\]
The gradient of the energy functional $I$ depends on the Riemannian geometry of the underlying manifold $(\M,\g)$, and is a tangent vector field related to the differential of $I$ via
\begin{equation}\label{6}
\diff I(v).\delta v\;=\; \g_v(\gradient I(v),\delta v)\quad\mbox{for all }\delta v\in \T_v\M,
\end{equation}
where $\T_v\M$ denotes the tangent plane at $v\in\M$. With this, the gradient flow can be rewritten in a gradient-free way, namely
\[
\g_v(\partial_t v, \delta v) +\diff I(v).\delta v\;=\;0 \quad\mbox{for all }\delta v\in \T_v\M,
\]
or variationally,
\begin{equation}\label{2}
\partial_t v= \argmin_{\delta v\in \T_v\M}\left\{ \frac12\g_v(\delta v, \delta v) + \diff I(v).\delta v\right\}.
\end{equation}

\medskip

Interpreting the fourth order equation \eqref{8} (with \eqref{4}) as a gradient flow, the energy has to be chosen as the modified generalized Fisher information
\[
I_{\theta}(v)=\frac{\theta}{2m-1}\int |\grad\left(v^{m-1/2}\right)|^2\, dx + \frac{1}2 \int |x|^2v\, dx.
\]
The manifold is the set of all nonnegative functions on $\R^N$ with fixed total mass $M$
\[
\M:= \left\{v:\R^N\to [0,\infty):\: \int v\, dx =M\right\}.
\]
In general, tangent fields at $v\in\M$ must be mass preserving and nonnegative where $v=0$. For tangent fields satisfying $\spt(\delta v)\subset \spt(v)$, 
the metric tensor is a weighted $H^{-1}$ inner product defined by
\begin{equation}\label{2b}
\g_v(\delta v, \delta v) = \int v |\grad \psi|^2\, dx.
\end{equation}
Notice that the condition on $\spt(\delta v)$ is trivially satisfied if $m=1$ since in this case solutions are expected to be positive everywhere in $\R^N$. In that situation, $\psi$ is related to $\delta v$ via the elliptic problem
\begin{equation}\label{2a}
-\div\left(v\grad \psi\right)\;=\;\delta v\quad\mbox{in }\R^N.
\end{equation}
For $m>1$, however, compactly supported solutions are known to exists and thus $\spt(\delta v)\subset \spt(v)$ is not always true. In this case, $\psi$ and $\delta v$ are related via the elliptic boundary value problem
\begin{equation}\label{5}
\begin{array}{rcll}-\div\left(v\grad \psi\right)&=&\delta v&\quad\mbox{in }\spt(v),\\ v\grad \psi\cdot \nu&=&0&\quad\mbox{on }\partial \spt(v).\end{array}
\end{equation}
The boundary conditions on $\psi$ have to be understood asymptotically, cf.\ \eqref{9b} below. For tangent fields with $ \spt(\delta v)\not\subset \spt(v)$, we simply set $\g_v(\delta v,\delta v)=\infty$. Notice that we can define the metric tensor also variationally (and simultaneously for all $m$) by
\[
\frac12\g_v(\delta v,\delta v) = \sup_{\varphi}\left\{ -\frac12\int v|\grad\varphi|^2\, dx + \int \varphi \delta v\, dx\right\},
\]
where the supremum is taken over all smooth functions $\varphi$ on $\R^N$. Indeed, the right hand side is finite only if $\spt(\delta v)\subset \spt(v)$ and in this case, the maximizer $\psi$ satisfies either \eqref{2a} or \eqref{5} depending on whether $v$ is strictly positive or not, and then \eqref{2b} follows via an integration by parts.

\medskip

Notice that Otto, who first discovered Wasserstein gradient flows, argued even more formally when requiring $-\div\left(v\grad\psi\right)=\delta v$ in $\R^N$, cf.\ \cite[Eq.\ (8)]{Otto01}. However,  due to the (eventually) singular behavior of solutions at $\partial\spt(u)$, the latter equation should rather be understood distributionally. Then, the asymptotic boundary conditions in \eqref{5} arise as natural boundary conditions associated with distributional solutions, see also \cite[Subsection 1.2]{Seis13}.

\medskip

In the following we show that the gradient flow associated with these ingredients gives indeed rise to our family of fourth-order equations. We only give the argument in the case $m>1$; the case $m=1$ is very similar. For this purpose, we first compute
\[
\diff I_{\theta}(v).\delta v\;=\; \theta\int \grad\left(v^{m-1/2}\right)\cdot \grad \left(v^{m-3/2}\delta v\right)\, dx +\frac{1}2 \int |x|^2\delta v\, dx.
\]
Then, invoking \eqref{5}, the gradient flow \eqref{2} becomes $\partial_t v + \div\left(v\grad \psi \right)=0$, where $\psi$ is the solution of
\begin{eqnarray*}
\psi&=&\argmin_{\varphi}\left\{\frac12\int v|\grad \varphi|^2\, dx-\theta\int \grad\left(v^{m-1/2}\right)\cdot \grad \left(v^{m-3/2}\div(v\grad\varphi)\right)\, dx\right.\\
&&\hspace{4em}\left.  -\frac{1}2 \int |x|^2\,\div(v\grad\varphi) dx\right\}.
\end{eqnarray*}
Here, the minimum is taken over all $\varphi$ satisfying $v\grad\varphi\cdot\nu=0$ on $\partial \spt(v)$. Solving this convex minimization problem is easy. We compute the Euler--Lagrange equations
\begin{eqnarray*}
\lefteqn{\int v\grad\psi\cdot\grad\varphi\, dx}\\
 &=&\theta\int \grad\left(v^{m-1/2}\right)\cdot \grad \left(v^{m-3/2}\div(v\grad\varphi)\right)\, dx
 +\frac{1}2 \int |x|^2\,\div(v\grad\varphi) dx ,
\end{eqnarray*}
for all $\varphi$ satisfying $v\grad\varphi\cdot\nu=0$ on $\partial \spt(v)$. Integration by parts and using the asymptotic boundary conditions for $\varphi$ yields
\begin{eqnarray*}
\lefteqn{\int v\grad\psi\cdot\grad\varphi\, dx}\\
&=& \frac{\theta(m-1/2)}{2m-2} \int_{\partial\spt(v)} \grad\left(v^{2m-2}\right)\cdot \nu \div\left(v\grad\varphi\right)\, dx\\
&&\mbox{} + \int v\grad\left(\theta v^{m-3/2}\laplace v^{m-1/2} - \frac12|x|^2\right)\cdot\grad\varphi\, dx.
\end{eqnarray*}
It remains to apply the fundamental lemma of calculus of variations to deduce that $\grad\psi=\theta\grad\left( v^{m-3/2}\laplace v^{m-1/2}\right) - x$, which yields \eqref{8}, and $\grad\left(v^{2m-2}\right)\cdot\nu=0$ on $\partial\spt(v)$, i.e., \eqref{4}.


\medskip

For the sake of completeness and to explain why the above gradient flow is referred to as a Wasserstein gradient flow, we finally introduce the {\em Wasserstein distance}. Given two nonnegative functions $w_0$ and $w_1$ with same total mass, the Wasserstein distance between $w_1$ and $w_0$ is defined as
\[
d_2(w_1,w_0)^2:= \inf_{\pi\in\Gamma(w_1,w_0)}\iint |x-y|^2\, d\pi(x,y),
\]
where the set $\Gamma(w_1,w_0)$ consists of all Borel measures $\pi\ge0$ on the product space $\R^N\times \R^N$ with marginals $w_1$ and $w_0$, i.e.,
\[
\iint \zeta(x)\, d\pi(x,y)\;=\; \int \zeta w_1\, dx\,\quad\mbox{and}\quad\iint \zeta(y)\, d\pi(x,y)\;=\; \int \zeta w_0\, dy,
\]
for all bounded continuous functions $\zeta$ on $\R^N$. Informally, the Wasserstein distance measures the optimal transport cost necessary to  transfer mass from the source $w_0$ to the sink $w_1$. In \cite{BenamouBrenier00}, {\sc Benamou \& Brenier} established a relation between this mass transport problem and fluid mechanics by observing that the geodesics distance induced by the metric tensor $\g$ on $\M$ is given by the Wasserstein distance:
\begin{eqnarray*}
\lefteqn{d_2(w_1,w_0)^2}\\
&=&\inf\left\{\int_0^1\g_w(\partial_s w(s),\partial_s w(s))\, ds:\: w:[0,1]\to\M, w(0)=w_0,\,w(1)=w_1\right\}.
\end{eqnarray*}
The curves $w(s)$ attaining the infimum of 
this variational principle were introduced by one of us earlier \cite{McCann97}. For an introduction to optimal transport, we refer to {\sc Villani}'s monograph \cite{Villani03}.

\subsection{An entropy-information relation}\label{S:min}

The aim of this subsection is the (formal) derivation of the entropy-information relation
\begin{equation}\label{9}
(N(m-1)+1)I_{\theta}(v)- \frac12\g_v(\gradient E(v), \gradient E(v)) =\left\{\begin{array}{ll}NM&\mbox{if }m=1,\\ N(m-1) E(v)&\mbox{if }m>1,\end{array}\right.
\end{equation}
where $E$ is the generalized entropy
\[
E(v)=\int e(v)\, dx + \frac12 \int |x|^2v\, dx,
\]
which is the dissipating functional in the Wasserstein gradient flow interpretation of the confined porous medium equation
\begin{equation}\label{5c}
\partial_t v-\div\left(xv\right) - \laplace\left(v^m\right)\;=\;0,
\end{equation}
cf.\ {\sc Otto} \cite{Otto01}. 
The entropy-information relation was observed by {\sc Matthes, Sava\-r\'e}, and the first author \cite[Corollary 3.2]{MatthesMcCannSavare09} and builds upon an observation of {\sc Carrillo \& Toscani} in \cite{CarrilloToscani00}. The above formula will turn out to be at the heart of the subsequent analysis. Indeed, \eqref{9} establishes a link between the generalized entropy and the generalized Fisher information, that is, between the dissipating functionals in the Wasserstein gradient flow formulation of the confined porous  medium equation and the confined fourth order equation, respectively. As a consequence, linearizing \eqref{8} around the global attractor $v_*$ yields an explicit formula for the fourth order displacement Hessian in terms of the porous medium displacement Hessian computed and studied by the second author in \cite{Seis13}. Moreover, eigenvalues and eigenfunctions of the porous medium displacement Hessian will immediately translate into those of the fourth order displacement Hessian.

\medskip

The second term in \eqref{9} takes the form
\begin{equation}\label{5d}
\g_v(\gradient E(v), \gradient E(v))\;=\; \int v|\grad\left( e'(v)+\frac12|x|^2\right)|^2\, dx.
\end{equation}
Before confirming \eqref{5d} and deriving \eqref{9}, we like to point out two immediate consequences of these two formulas: First, \eqref{5d} implies that, for any given mass $M$,  $v_*$ is the unique critical point of the generalized entropy $E$, and thus, by the strict convexity of this functional, $v_*$ is the unique ground state. Thanks to \eqref{9}, it then follows that the same property is passed along to the confined fourth order equation: $v_*$ is also the unique ground state of the generalized Fisher information $I_{\theta}$.

\medskip 

We start with verifying \eqref{5d}. The gradient on a Riemannian manifold can be computed via the differential, cf.\ \eqref{6}. We have
\[
\g_v(\delta v, \gradient E(v))\;=\; \diff E(v).\delta v\;=\; \int\left(e'(v) + \frac12 |x|^2\right)\delta v\, dx,
\]
for any tangent field $\delta v$. Letting $\psi$ be the function related to $\delta v$ via \eqref{5}, an integration by parts yields
\[ 
\g_v(\delta v, \gradient E(v))\;=\; \int v\grad \left(e'(v) + \frac12 |x|^2\right)\cdot \grad\psi\, dx,
\]
and thus \eqref{5d} follows upon choosing $\delta v=\gradient E(v)$.




\medskip

The entropy-information relation \eqref{9} is essentially a consequence of \eqref{5d} by expanding the square in the integrand and regrouping terms. Indeed, we have
\begin{eqnarray*}
\lefteqn{\int v|\grad\left( e'(v)+\frac12|x|^2\right)|^2\, dx}\\
&=& 
\int v|\grad \left(e'(v)\right)|^2\, dx + \int |x|^2 v\, dx + 2\int x\cdot v\grad\left(e'(v)\right)\, dx\\
 &=&\left(\frac{m}{m-1/2}\right)^2\int |\grad\left(v^{m-1/2}\right)|^2\, dx + \int |x|^2 v\, dx - 2N\int v^m\, dx.
\end{eqnarray*}
If $m=1$, then $\theta=2$ and thus \eqref{9} follows immediately via \eqref{5d}. If $m>1$, we rewrite with the help of $\theta$, cf.\ \eqref{8a},
\begin{eqnarray*}
\lefteqn{\frac12\int v|\grad\left( e'(v)+\frac12|x|^2\right)|^2\, dx}\\
&=&(N(m-1)+1) \left(\frac{\theta}{2m-1} \int |\grad\left(v^{m-1/2}\right)|^2\, dx+ \frac12\int |x|^2 v\, dx\right)\\
&&\mbox{} - N(m-1)\left(\int e(v)\, dx + \frac12 \int |x|^2v\, dx\right).
\end{eqnarray*}
By the definition of $I_{\theta}$ and $E$ this immediately yields \eqref{9}.

\subsection{Linearization}\label{S:Lin}

In this subsection, we finally linearize the confined fourth order equation around its global attractor $v_*$. However, instead of linearizing \eqref{8} directly, we compute the Hessian of the generalized Fisher information $I_{\theta}$ with respect to the Riemannian geometry introduced in Subsection \ref{S:GF} above. Although both approaches are formally equivalent, the latter has the advantage that a class of tangent vectors are intrinsically given, and thus the linearized operator is naturally equipped with an analytical framework. The same strategy was already pursued in the analogous studies \cite{DenzlerMcCann05} and \cite{Seis13} in the context of the fast diffusion equation and porous medium equation, respectively.

\medskip

We first review some general facts from Riemannian geometry. Recall that for any function $F$ on $(\M,\g)$ with $\gradient F(v_*)=0$ for some $v_*\in\M$, the Hessian at $v_*$ can be computed by taking the second derivatives along any curve through $ v_*$. Indeed, if $(r,s)\mapsto v_{(r,s)}\in \M$ is a family of curves satisfying  $v_{(0,0)}=v_*$ and if $\delta v=\left.\partial_r\right|_{r,s=0}v_{(r,s)}$ and $\delta w=\left.\partial_s\right|_{r,s=0}v_{(r,s)}$, then
\begin{eqnarray*}
\Hess F(v_*)(\delta v,\delta w)&:=& \left.\frac{d^2}{drds}\right|_{r,s=0}F(v_{(r,s)})\\
 &=&\left. \frac{d}{dr}\right|_{r,s=0} \g_{v_{(r,s)}}\left(\partial_s v_{(r,s)},\gradient F(v_{(r,s)})\right)\\
&=& \g_{v_*} \left(\delta w,\hess F(v_*)\delta v\right) + \g_{v_*}\left( \left.D_r\partial_s\right|_{r,s=0} v_{(r,s)},\gradient F(v_*)\right)\\
&=&\g_{v_*} \left( \delta w,\hess F(v_*)\delta v\right),
\end{eqnarray*}
where $D_r$ denotes the covariant derivative along the curve $r\mapsto v_{(r,s)}$, cf.\ \cite[Section 2.1.2]{Peterson}, and we have used criticality of $v_*$ in the last identity. Likewise, if $s\mapsto v_{s}\in \M$ is a curve satisfying  $v_0=v_*$ and if $\delta v=\left.\partial_s\right|_{s=0}v_{s}$, then
\begin{eqnarray*}
\lefteqn{\left.\frac{d^2}{ds^2}\right|_{s=0} \g_{v_s}(\gradient F(v_s), \gradient F(v_s))}\\
&=& 2\left.\frac{d}{ds}\right|_{s=0} \g_{v_s}\left(\gradient F(v_s), \hess F(v_s)\partial_sv_s\right)\\
&=& 2 \g_{v_*}(\hess F(v_*)\delta v,\hess F(v_*)\delta v) + 2\g_{v_*}\left(\gradient F(v_*), \left.D_s\right|_{s=0}\left(\hess F(v_s)\partial_s v_s\right)\right)\\
&=& 2 \g_{v_*}(\hess F(v_*)\delta v,\hess F(v_*)\delta v).
\end{eqnarray*}

\medskip

As a consequence, in our particular setting,
since $v_*$ is a common minimizer of $I_{\theta}$ and $E$, differentiating \eqref{9} along curves $s\mapsto v_s\in\M$ with $v_0=v_*$ and $\delta v=\left.\partial_s\right|_{s=0}v_s$ yields
\begin{eqnarray}
\lefteqn{(N(m-1)+1)  \Hess I_{\theta}(v_*)(\delta v,\delta v)}\nonumber\\
&=& \g_{v_*}(\hess E(v_*)\delta v, \hess  E(v_*)\delta v) + N(m-1) \g_{v_*}(\delta v, \hess E(v_*)\delta v).\label{9a}
\end{eqnarray}
(This formula is also consistent with \eqref{9} for $m=1$, since in this case the constant $M$ vanishes in the linearization.) From here it is easy to see the relationship \eqref{9aa} of the displacement Hessian $\Ha_I$ we seek to the operator studied in \cite{Seis13}; the action of $\Ha_I$ on $\H$ can also be deduced from the explicit computation of $\Ha_E$ given there. However, for the convenience of the reader, let us also calculate the terms on the right-hand side directly. We first recall that
\[
\g_v( \delta w,\gradient E(v))\;=\; \int \left(e'(v) + \frac12|x|^2\right)\delta w\, dx,
\]
and thus the Hessian of the entropy functional is
\[
\g_{v_*}(\delta w,\hess E(v_*)\delta v)\;=\; \int e''(v)\delta v\delta w\, dx\;=\; \int mv_*^{m-2}\delta v\delta w\, dx.
\]
Letting $\psi$ and $\varphi$ be the functions related to $\delta v$ and $\delta w$ via \eqref{5} and integrating by parts, we rewrite the above identity as	
\[
\g_{v_*}(\delta w,\hess E(v_*)\delta v)\;=\; \int v_*\grad\left(-m v_*^{m-2} \div\left(v_*\grad \psi\right)\right)\cdot \grad\varphi\, dx.
\]
We have thus derived the Hessian of the generalized entropy. Following  \cite{DenzlerMcCann05,Seis13}, we now introduce the {\em displacement Hessian} $\Ha_E$ associated with the porous medium flow, namely
\[
\Ha_E\psi:=  -m v_*^{m-2} \div\left(v_*\grad\psi\right).
\]
A short computation yields
\[
\Ha_E\psi(x)\;=\; \left\{\begin{array}{ll} -\laplace \psi(x) + x\cdot \grad\psi(x) &\mbox{if }m=1,\vspace{0.3em}\\-\frac{m-1}2(1-|x|^2)\laplace \psi(x) + x\cdot \grad\psi(x) &\mbox{if }m>1.\end{array}\right.
\]
Notice that in the case $m=1$, the operator $\Ha_E$ is the well-known Ornstein--Uhlenbeck operator.

\medskip

In particular, choosing $\delta v=\delta w$, we obtain
\[
\g_{v_*}(\delta v, \hess E(v_*)\delta v)\;=\; \int v_* \grad\Ha_E\psi\cdot\grad\psi\, dx,
\]
and with $\delta w=\hess E(v_*)\delta v$, we have
\[
\g_{v_*}(\hess E(v_*)\delta v, \hess  E(v_*)\delta v) \;=\; \int v_* |\grad \Ha_E\psi|^2\, dx\;=\; \int v_* \grad \Ha^2_E\psi\cdot \grad \psi\, dx,
\]
where in the last equality we integrated by parts and used the symmetry of $\Ha_E$. Substituting these identities into \eqref{9a}, we then conclude
\[
(N(m-1)+1)\Hess I_{\theta}(v)(\delta v, \delta v)\;=\;\int v_* \grad\psi\cdot\grad\left( \Ha_E^2 +N(m-1)\Ha_E\right)\psi\, dx.
\]
The {\em displacement Hessian $\Ha_I$} associated with the confined fourth order equation \eqref{8} is thus
\begin{equation}\label{9aa}
\Ha_I:= \frac{\Ha_E^2 +N(m-1)\Ha_E}{1+N(m-1)}.
\end{equation}
In the following section, we study the complete spectrum of the above operator.

\section{Rigorous spectral analysis}

While the linearization of the confined equation \eqref{8} around the global attractor $v_*$ and thus the definition of displacement Hessian $\Ha_I$ in the previous section were purely formal, our analysis will be rigorous from here on.

\medskip

Let $\H$ denote the class of all locally integrable functions on $\spt(v_*)$ such that
\[
\|\psi\|_{\H}^2:=\int v_* |\grad \psi|^2\, dx\;<\;\infty,
\]
with the identification of functions that only differ by an additive constant. We recall that in the case $m>1$ the Barenblatt profile $v_*$ is compactly supported on a Ball of radius one, while for $m=1$ the Barenblatt profile is a Gaussian and thus positive everywhere, see \eqref{10b}. In any of these cases, the weighted Sobolev space $\H$ is a separable Hilbert space with respect to the topology induced by $\|\cdot\|_{\H}$.

\medskip

The calculations that led to the definition of the displacement Hessians $\Ha_E$ and $\Ha_I$ are certainly valid for functions that are smooth and bounded on the support of the Barenblatt profile, i.e., for $C^{\infty}_b(\spt(v_*))$-functions, and thus, $\Ha_E$ and $\Ha_I$ are both well-defined on $C^{\infty}_b(\spt(v_*))$. (Notice that $C^{\infty}_b(\spt(v_*))$ equals $C_b^{\infty}(\R^N)$ if $m=1$ and $C^{\infty}(\bar B_1)$ if $m>1$.)  As $C^{\infty}_b(\spt(v_*))$ is a dense subspace of $\H$, see, e.g., Lemma \ref{L1} in the appendix or \cite[Lemma 2]{Seis13}, both $\Ha_E$ and $\Ha_I$ are densely defined operators on $\H$. Moreover, they are nonnegative and symmetric, cf.\ proof of Proposition \ref{P1} below, and thus closable in $\H$. Of course, the closures of these operators, still denoted by $\Ha_E$ and $\Ha_I$, are still nonnegative and symmetric.

\medskip

The (closed) operator $\Ha_E$ is self-adjoint with domain
\[
\D(\Ha_E)\;=\;\left\{\psi\in H_{\loc}^3(\spt(v_*)):\: \psi,\, \Ha_E\psi\in\H\right\}.
\]
This feature is presumably 
well-known in the case $m=1$ where $\Ha_E$ is the Ornstein--Uhlenbeck operator and when the underlying Hilbert space is the Gauss space $L^2(e^{-|x|^2/2}dx) = L^2(v_*dx)$. We establish self-adjointness in the Gauss--Sobolev space $\H$ in Proposition \ref{P3} of the appendix. Self-adjointness in the case $m>1$ was recently proved in \cite{Seis13}. Notice that for every $\psi\in\D(\Ha_E)$ we have the integration by parts formula
\begin{equation}\label{9b}
\int v_* \grad\xi\cdot\grad\psi\, dx\;=\; -\int \xi \div\left(v_*\grad\psi\right)\, dx\quad\mbox{for all }\xi \in\H,
\end{equation}
which can be easily seen via approximation with $C_b^{\infty}(\spt v_*)$ functions. In the case $m>1$, this implies the asymptotic boundary condition $v_*\grad\psi\cdot \nu=0$ on $\partial B_1$ which in the case $\psi\in C^{1}( B_1)$ simply becomes
\[
\lim_{|x|\uparrow 1}v_*(x)\grad\psi(x)\cdot\frac{x}{|x|}=0,
\]
cf.\ \cite[Remark 1]{Seis13}. We finally introduce the space
\[
\D(\Ha_I)\;=\; \left\{\psi\in H_{\loc}^5(\spt(v_*)):\: \psi,\,\Ha_E\psi,\, \Ha_E^2\psi\in\H\right\}.
\]
We will see in Proposition \ref{P1} below that $\Ha_I:\D(\Ha_I)\to \H$ is a self-adjoint operator.

\medskip

\subsection{Main results}\label{S:Main}

We are now in the position to state our main results. Let $\N_0=\{0,1,2,\ldots\}$ denote the set of nonnegative integers.

\begin{theorem}\label{T1}
The operator $\Ha_I: \D(\Ha_I)\to \H$ is self-adjoint. Its spectrum is purely discrete and given by the eigenvalues
\[
\mu_{\ell k}:=\frac{\lambda_{\ell k}^2 + N(m-1)\lambda_{\ell k}}{1+N(m-1)},
\]
for $(\ell,k)\in\N_0\times\N_0\setminus\{(0,0)\}$ if $N\ge 2$ and $(\ell,k)\in\{0,1\}\times\N_0\setminus\{(0,0)\}$ if $N=1$, where
\[
\lambda_{\ell k} \;=\;\ell + 2k + 2k(k + \ell + \frac N 2-1)(m-1).
\]
In the case $m=1$, the corresponding eigenfunctions are Hermite polynomials, and in the case $m>1$, the eigenfunctions are given by polynomials of the form
\[
\psi_{\ell n k}(x) =   F(-k, \frac1{m-1} +\ell +\frac{N}2 -1 +k; \ell+\frac{N}2;|x|^2)Y_{\ell n}\left(\frac{x}{|x|}\right)|x|^{\ell},
\]
$n\in\{1,\dots,N_{\ell}\}$ with $N_{\ell}=1$ if $\ell=0$ or $\ell=N=1$ and $N_{\ell}= \frac{(N+\ell-3)!(N+2\ell-2)}{\ell!(N-2)!}$ else, where $F(a,b;c;z)$ is a hypergeometric function and $Y_{\ell n}$ is a spherical harmonic if $N\ge2$, corresponding to the eigenvalue $\ell(\ell + N-2)$ of $-\laplace_{\S^{N-1}}$ with multiplicity $N_{\ell}$. Otherwise, if $N=1$ it is $Y_{\ell n}(\pm1)=(\pm1)^{\ell}$.
\end{theorem}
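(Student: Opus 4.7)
The plan is to exploit the factorization $\Ha_I=p(\Ha_E)$ with $p(\lambda)=(\lambda^2+N(m-1)\lambda)/(1+N(m-1))$ derived in \eqref{9aa}, and to reduce everything to the spectral theorem applied to the known self-adjoint operator $\Ha_E$. More precisely, the self-adjointness of $\Ha_E:\D(\Ha_E)\to\H$, its pure-point spectrum $\{\lambda_{\ell k}\}$, and its eigenfunctions $\psi_{\ell n k}$ are available: for $m>1$ from \cite{Seis13}, and for $m=1$ from Proposition \ref{P3} in the appendix (combined with the classical Ornstein--Uhlenbeck analysis that produces the Hermite eigenfunctions). The job is therefore to transfer these facts to $\Ha_I$.

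First I would note that since $p$ is a polynomial with real coefficients and $\Ha_E$ is self-adjoint, the polynomial functional calculus immediately produces a self-adjoint operator $p(\Ha_E)$ whose natural domain is
\[
\D(p(\Ha_E))\;=\;\{\psi\in\D(\Ha_E)\,:\,\Ha_E\psi\in\D(\Ha_E)\}\;=\;\{\psi\in\H\,:\,\psi,\Ha_E\psi,\Ha_E^2\psi\in\H\}.
\]
By elliptic regularity applied to the second-order operator $\Ha_E$ (which on $\spt(v_*)$ is uniformly elliptic away from the boundary $\partial B_1$ when $m>1$), the conditions $\Ha_E\psi\in\H$ and $\Ha_E^2\psi\in\H$ upgrade the interior regularity of $\psi$ to $H^3_{\loc}$ and $H^5_{\loc}$, respectively, which exactly matches the definition of $\D(\Ha_I)$ given before Theorem \ref{T1}. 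On this common domain, $\Ha_I$ and $p(\Ha_E)$ coincide pointwise, essentially by construction of $\Ha_I$ from the Hessian identity \eqref{9a}, so $\Ha_I=p(\Ha_E)$ as operators and self-adjointness follows.

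Next I would invoke the spectral mapping theorem for polynomials of self-adjoint operators: $\sigma(p(\Ha_E))=p(\sigma(\Ha_E))$. Since $\sigma(\Ha_E)=\{\lambda_{\ell k}\}$ is purely discrete with finite-dimensional eigenspaces spanned by the $\psi_{\ell n k}$, and since $p$ maps each $\lambda_{\ell k}$ to $\mu_{\ell k}$, the spectrum of $\Ha_I$ is the discrete set $\{\mu_{\ell k}\}$, with the same eigenfunctions: indeed $\Ha_E\psi_{\ell n k}=\lambda_{\ell k}\psi_{\ell n k}$ implies $\Ha_I\psi_{\ell n k}=p(\lambda_{\ell k})\psi_{\ell n k}=\mu_{\ell k}\psi_{\ell n k}$. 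The excluded pair $(\ell,k)=(0,0)$ corresponds to $\lambda_{00}=0$ and constant eigenfunctions, which vanish in $\H$ since we mod out additive constants; the restriction to $\ell\in\{0,1\}$ when $N=1$ reflects that $\S^{0}=\{\pm 1\}$ supports only the two spherical harmonics $Y_{0}\equiv 1$ and $Y_1(\pm 1)=\pm 1$.

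The main obstacle I anticipate is purely technical and concerns the identification $\D(p(\Ha_E))=\D(\Ha_I)$: one must verify that the $H^5_{\loc}$ regularity really is implied by $\psi,\Ha_E\psi,\Ha_E^2\psi\in\H$, bootstrapping twice off the elliptic operator $\Ha_E$, and one must justify the integration-by-parts step (used in deriving the Hessian identity) for all such $\psi$ rather than only for smooth test functions. The former is standard interior elliptic regularity together with the boundary analysis in \cite{Seis13} that handles the degeneracy of $v_*$ at $\partial B_1$ via the asymptotic Neumann condition $v_*\grad\psi\cdot\nu=0$; the latter is a density argument, approximating $\psi\in\D(\Ha_I)$ by $C^\infty_b(\spt v_*)$-functions as in the passage leading to \eqref{9b}. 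Once these are in place, the theorem follows from the abstract spectral theorem with no additional computation.
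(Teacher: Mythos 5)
Your proposal is correct, and the underlying idea --- reduce everything about $\Ha_I$ to the known spectral theory of $\Ha_E$ via the polynomial relation \eqref{9aa} --- is exactly the paper's. Where you differ is in the packaging. The paper does \emph{not} invoke the functional calculus: it proves self-adjointness (Proposition \ref{P1}) by checking that $\Ha_I$ is densely defined and symmetric and then exhibiting the explicit bounded everywhere-defined inverse $\Ha_I^{-1}=\Ha_E^{-1}(\Ha_E+1)^{-1}$ (using that $0$ and $-1$ lie in the resolvent set of $\Ha_E$), which gives self-adjointness by \cite[Theorem 13.11]{Rudin} and discreteness by compactness of $\Ha_E^{-1}$; and it transfers eigenvalues by hand, factoring $\Ha_I\psi=\mu\psi$ as $\Ha_E(\Ha_E\psi+\eps\psi)=\lambda(\Ha_E\psi+\eps\psi)$ with $\eps>0$. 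Your route via $\Ha_I=p(\Ha_E)$ and the spectral mapping theorem is legitimate and arguably cleaner, but note two things. First, all of its content is concentrated in the identification of $\D(\Ha_I)$ with $\D(\Ha_E^2)$, i.e.\ the elliptic bootstrap you flag; this is precisely the issue behind the paper's warning that self-adjointness of $\Ha_I$ does not follow from that of $\Ha_E$ by naive adjoint algebra, so you should not present it as a routine afterthought --- it is the analogue of Proposition \ref{P1}. Second, when you read off eigenfunctions from the spectral mapping theorem you should say explicitly that the quadratic $p(\lambda)=\mu$ has two roots whose product is $-\mu(1+N(m-1))<0$, so the second root is negative and hence not in $\sigma(\Ha_E)\subset(0,\infty)$; this is what guarantees that the $\mu$-eigenspace of $\Ha_I$ is exactly the $\lambda$-eigenspace of $\Ha_E$ for the single positive root, and it is the role played by $\eps>0$ in the paper's factorization. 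With those two points made precise, your argument is a complete and valid alternative to the one in the text.
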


The values $\lambda_{\ell k}$ are the eigenvalues of the porous medium displacement Hessian operator and both $\Ha_I$ and $\Ha_E$ have the same eigenfunctions, see \cite[Theorem 1]{Seis13} or Theorem \ref{T2} of the appendix. Hence, as expected, the eigenvalues of $\Ha_E$ translate into the eigenvalues of $\Ha_I$ precisely in the same way as the operator $\Ha_E$ translates into the operator $\Ha_I$, cf.\ \eqref{9aa}.

\medskip

Hermite polynomials can be recursively defined by
\[
\psi_0(z)=1, \quad \psi_{n+1}(z) = z\psi_n(z) - \psi_n'(z)\quad\mbox{for }n\in\N,
\]
where $z\in\R$, and then in higher dimensions for every multi-index $\alpha=(\alpha_1,\dots,\alpha_N)\in \N_0^N$ via $\psi_{\alpha}(x) = \psi_{\alpha_1}(x_1)\cdots\psi_{\alpha_N}(x_N)$, cf., e.g., \cite{Thangavelu93,Sjogren97}. Hypergeometric functions $F(a,b;c,z)$ are power series of the form
\begin{equation}\label{9c}
F(a,b;c;z)=\sum_{j=0}^{\infty}\frac{(a)_j(b)_j}{(c)_j j!}z^j,
\end{equation}
where $a,b,c,z\in\R$ and $c$ is not a non-positive integer. The definition involves the Pochhammer symbols
\[
(s)_j=s(s+1)\cdots(s+j-1), \quad\mbox{for }j\ge 1, \quad\mbox{and}\quad(s)_0=1.
\]
Since the hypergeometric functions reduce to polynomials of degree $k$ in the case $a=-k$, the eigenfunctions $\psi_{\ell n k}$ are polynomials of degree $\ell +2k$ and are harmonic if $k=0$. The literature on hypergeometric functions and spherical harmonics is vast, see, e.g., \cite{Abramowitz,BealsWong10,Groemer96}.

\medskip

The knowledge of the complete spectrum of the displacement Hessian operator is a promising starting situation for a full asymptotic expansion of solutions to the fourth-order equation \eqref{1} around the self-similar solution: In view of the discreteness of the spectrum, all modes are in principle accessible. A technical difficulty lies in the incompatibility of the space in which the eigenfunctions live and the space in which solutions  depend differentially on the initial data, and thus, in which the equation can be rigorously linearized. A framework for this differentiable dependency is provided in {\sc Koch}'s habilitation thesis \cite{Koch} in the context of the porous medium equation, recently extended to the fast diffusion regime by {\sc Denzler, Koch}, and the first author \cite{DenzlerKochMcCann13}. In the latter work, the authors study the higher-order asymptotics of solutions to the fast diffusion equation around the self-similar solution based on the spectral analysis of {\sc Denzler} and the first author. However, their expansion is naturally limited to a finite number of modes due to the occurrence of continuous spectrum.  The discreteness of the spectrum of the fourth-order equation (and equivalently of the porous medium equation \cite{Seis13}) is analytically related to the fact that the Barenblatt solutions possess moments to all orders only if $m\ge1$.

\medskip

The eigenvalues of $\Ha_I$ are nonlinear functions of $m$ and eigenvalue crossings occur throughout the spectrum. Moreover, the eigenvalues are increasing functions of the eigenvalues of the porous medium equation $\lambda_{\ell k}$, and thus, we expect the same ordering of eigenmodes with respect to the rate of convergence for both equations. For any value of $N$ and $m$, the dynamics are translation-governed as the smallest eigenvalue $\mu_{10}=1$ corresponds to a spatial translation in direction of the $n$-th coordinate axis $e_n$, for $n\in\{1,\dots,N_1=N\}$. The role of the eigenfunctions is best understood by considering
geodesics in the Wasserstein space, which are given by displacement interpolants (cf.\ \cite{McCann97}) via $v_*(x) = \det\left(I + s\grad^2\psi(x)\right)v_s(x+s\grad\psi(x))$, that is, push-forwards of $v_*$ under the map $\mbox{id} + s\grad\psi$. Observe that $v_s$ generates tangent fields in the Wasserstein gradient flow interpretation of the dynamics, cf.\ Section \ref{S:GF}, since $\left.\partial_s\right|_{s=0} v_s  = -\div\left(v_* \grad \psi\right)$. The eigenfunctions $\psi_{1n0}$ are affine functions and perturbations are thus generated by translations $x + sc_ne_n$ for some constant $c_n\in\R$. A fully rigorous justification of the translation-governed dynamics was obtained by {\sc Carrillo \& Toscani} \cite{CarrilloToscani02} for $N=1$ and $m=3/2$ and by {\sc Matthes, Savar\'e}, and the first author \cite{MatthesMcCannSavare09} for general $N$ and $1\le m\le 3/2$ who prove
\[
d_2(v(t),v_*)\lesssim e^{-t},
\]
where $d$ denotes the Wasserstein distance. This bound is sharp for translations. The second smallest eigenvalue $\mu_{20}$ corresponds to affine transformation, and is generated by transformations $x+sA_{2n}x$ for some symmetric and trace-free matrix $A_{2n}$. Such transformations were studied in detail by {\sc Denzler} and the first author \cite{DenzlerMcCann08}. After the second eigenvalue, a first level crossing occurs and we have $\mu_{01}\le\mu_{30}$ precisely if $N(m-1)\le1$. The eigenvalues $\mu_{30}$ and $\mu_{01}$ correspond to pear-shaped deformations (with order 3 symmetry) and dilations, respectively. The geometric complexity of the higher modes is increasing with the degree of the polynomials, and so we do not attempt to extend this discussion to larger values of $\mu_{\ell k}$.

\subsection{Self-adjointness}\label{S:SA}

In this subsection, we prove that the unbounded operator $\Ha_I: \D(\Ha_I)\to \H$ is self-adjoint and nonnegative. Both properties ensure that the spectrum of $\Ha_I$ is contained in $[0,\infty)$. On a heuristic level, this insight is not surprising: The operator $\Ha_I$ is defined through the Hessian of an energy at a critical point (in fact, it is related to the Hessian by a similarity transformation). Moreover, the critical point is actually the energy minimizer and thus the Hessian must be nonnegative.

\begin{prop}\label{P1}
The operator $\Ha_I: \D(\Ha_I)\to\H$ is positive-definite, self-adjoint, and its spectrum is purely discrete.
\end{prop}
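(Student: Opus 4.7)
The plan is to deduce everything from the corresponding properties of $\Ha_E$ via the functional-calculus identity
\[
\Ha_I \;=\; p(\Ha_E), \qquad p(t) \;:=\; \frac{t^2 + N(m-1)\,t}{1+N(m-1)}.
\]
By \cite{Seis13} for $m > 1$ and by Proposition \ref{P3} of the appendix for $m = 1$, $\Ha_E$ is self-adjoint on $\D(\Ha_E)$ with purely discrete nonnegative spectrum $\{\lambda_{\ell k}\}$. Substituting $\xi = \Ha_E\psi$ into \eqref{9b} and integrating by parts once more gives, first for $\psi \in C_b^\infty(\spt v_*)$ and then by density for $\psi \in \D(\Ha_E)$,
\[
\int v_*\,\grad\psi \cdot \grad\left(\Ha_E\psi\right)\, dx \;=\; \int m\, v_*^{m-2}\bigl(\div(v_*\grad\psi)\bigr)^2\, dx \;\geq\; 0,
\]
with equality only if $\psi$ is constant, and hence $\psi = 0$ in $\H$. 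So $\Ha_E$ is in fact strictly positive on $\H$.

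Given self-adjointness and nonnegativity of $\Ha_E$, the spectral theorem produces the self-adjoint operator $p(\Ha_E)$ on the natural spectral-calculus domain $\{\psi \in \H : \int p(\lambda)^2\, d\|E_\lambda\psi\|^2 < \infty\}$, where $E_\lambda$ is the spectral resolution of $\Ha_E$. Because $p$ is quadratic with nonnegative coefficients and $\Ha_E \geq 0$, this domain coincides with $\D(\Ha_E^2) := \{\psi \in \D(\Ha_E) : \Ha_E\psi \in \D(\Ha_E)\}$, on which $p(\Ha_E)\psi$ agrees pointwise with $\frac{1}{1+N(m-1)}\bigl(\Ha_E^2\psi + N(m-1)\Ha_E\psi\bigr)$. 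Unwinding $\D(\Ha_E^2)$ via the definition of $\D(\Ha_E)$ yields the conditions $\psi, \Ha_E\psi \in H^3_{\loc}(\spt v_*)$ together with $\psi, \Ha_E\psi, \Ha_E^2\psi \in \H$. On the open set $\{v_* > 0\}$ --- which equals $\R^N$ if $m = 1$ and $B_1$ if $m > 1$ --- the coefficient $v_*$ is smooth and strictly positive, so $\Ha_E$ is a smooth strictly elliptic second-order operator, and standard interior elliptic regularity (Nirenberg's difference-quotient bootstrap applied twice) upgrades $\psi, \Ha_E\psi \in H^3_{\loc}$ to $\psi \in H^5_{\loc}$; the converse is immediate. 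Hence $\D(\Ha_E^2) = \D(\Ha_I)$.

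All three conclusions then follow at once: $\Ha_I = p(\Ha_E)$ is self-adjoint on $\D(\Ha_I)$; its spectrum is $\sigma(\Ha_I) = p(\sigma(\Ha_E)) = \{\mu_{\ell k}\}$, purely discrete since $\sigma(\Ha_E)$ is and $p(t) \to +\infty$; and positive-definiteness follows from strict positivity of $\Ha_E$ together with $p(t) > 0$ for $t > 0$, giving $\Ha_I \geq p(\lambda_{10}) = \mu_{10} > 0$. The main obstacle I anticipate is the domain identification in the second paragraph: although interior elliptic regularity is standard, in the degenerate case $m > 1$ one must keep the bootstrap strictly local in $B_1$ so as not to encounter the degeneracy of $\Ha_E$ at $\partial B_1$. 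The asymptotic boundary condition $v_*\grad\psi \cdot \nu = 0$ on $\partial B_1$ is already encoded in $\D(\Ha_E)$ via \eqref{9b}, and is inherited automatically under iteration, so no additional boundary analysis is required.
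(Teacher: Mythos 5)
Your proof is correct, but it takes a genuinely different route from the paper's. The paper argues operator-theoretically without the spectral theorem: it verifies symmetry and nonnegativity of $\Ha_I$ on $\D(\Ha_I)$ by two applications of the integration-by-parts formula \eqref{9b}, then invokes the criterion that a densely defined symmetric operator with an everywhere-defined bounded inverse is self-adjoint (\cite[Theorem 13.11]{Rudin}), producing that inverse explicitly as the composition $\Ha_I^{-1}=\Ha_E^{-1}(\Ha_E+1)^{-1}$ of two resolvents of $\Ha_E$; discreteness then follows because $\Ha_E^{-1}$ is compact, hence so is $\Ha_I^{-1}$. You instead define $\Ha_I=p(\Ha_E)$ by functional calculus, so self-adjointness is automatic and the spectrum comes for free from the spectral mapping theorem; the entire burden shifts to identifying the spectral-calculus domain with $\D(\Ha_I)$, which you do correctly via $\D(p(\Ha_E))=\D(\Ha_E^2)$ (using $p(\lambda)^2\asymp\lambda^4$ on $\sigma(\Ha_E)\subset[1,\infty)$, which is exactly where the paper's caveat about $\T^*\SS^*\subset(\SS\T)^*$ is neutralized) and interior elliptic regularity on $\{v_*>0\}$ to pass between $\D(\Ha_E^2)$ and the $H^5_{\loc}$ formulation of $\D(\Ha_I)$. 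It is worth noting that this regularity step is not extra work you invented: the paper's surjectivity argument also implicitly requires that $\Ha_E^{-1}(\Ha_E+1)^{-1}\xi$ lands in $\D(\Ha_I)$ and not merely in $\D(\Ha_E^2)$, so your proof makes explicit a point the paper glosses over. What the paper's route buys is independence from the spectral theorem and a self-contained compactness argument for discreteness; what yours buys is the spectrum $\sigma(\Ha_I)=p(\sigma(\Ha_E))=\{\mu_{\ell k}\}$ and the sharp lower bound $\Ha_I\ge\mu_{10}=1$ already at this stage, absorbing part of the proof of Theorem \ref{T1}. Two minor remarks: your strict-positivity computation for $\Ha_E$ is redundant once you cite the known discrete spectrum $\{\lambda_{\ell k}\}\subset[1,\infty)$, and by itself positive-definiteness of the form would not bound the spectrum away from zero --- it is the discreteness (compact resolvent) that does, so keep that citation in place.
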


We remark that self-adjointness is {\em not} a consequence of the self-adjointness of $\Ha_E$ alone, because $\T^*\SS^*\subset (\SS\T)^*$ and $(\T + \SS)^*\subset\T^* +\SS^*$ if $*$ denotes the adjoint operation and equality is in general not true. Instead, we also use the positivity of $\Ha_E$ in the sense that zero is in the resolvent set. For a (brief) survey on unbounded operators and the definition of self-adjointness, we refer to \cite[Ch.\ 13]{Rudin} or \cite{Schmudgen}.

\begin{proof} 
%
%
%
%

Observe first that there is no loss of generality in setting all constants to one, so that $\Ha_I=\Ha_E^2 + \Ha_E$. 
It is shown in Lemma \ref{L1} of the appendix and \cite[Lemma 2]{Seis13} that $C_b^{\infty}(\spt(v_*))$ is dense in $\H$. Since $C_b^{\infty}(\spt(v_*))\subset \D(\Ha_I)\subset \H$, we see that $\Ha_I$ is densely defined on $\H$. Moreover, by the self-adjointness and nonnegativity of $\Ha_E$, and since $\D(\Ha_E)\subset \D(\Ha_I)$, it immediately follows that $\Ha_I$ is nonnegative and symmetric on $\D(\Ha_I)$. Indeed, for all $\varphi,\, \psi\in \D(\Ha_I)$, we have that
\begin{eqnarray*}
\int v_*\grad\varphi\cdot \grad\left(\Ha_E^2 \psi \right)\ dx 
&=& m \int v_*^{m-2} \div\left(v_*\grad\varphi\right)\cdot \div\left(v_*\grad\Ha_E\psi\right)\,dx\\
&=& \int v_* \grad\left(\Ha_E\varphi\right)\cdot\grad\left(\Ha_E\psi\right)\, dx,
\end{eqnarray*}
where we have used 
\eqref{9b} both in the first and in the second equality together with $\Ha_E^2\psi\in\H$ and $\Ha_E\varphi\in\H$. This calculation shows both symmetry and nonnegativity of $\Ha_E^2$ on $\D(\Ha_I)$. Since $\Ha_E$ is also non-negative and symmetric on $\D(\Ha_E)\supset\D(\Ha_I)$, the same property holds true for $\Ha_I$.

\medskip

From the fact that $\Ha_I$ is a densely defined, symmetric operator, we deduce that $\Ha_I\subset \Ha_I^*$, where the inclusion has to be understood as an inclusion of the corresponding graphs. We now show that $\Ha_I$ is self-adjoint, i.e., $\Ha_I=\Ha_I^*$. For this, it is enough to show that $\Ha_I$ has an everywhere defined bounded inverse $\Ha_I:\H\to\H$, cf.\ \cite[Theorem 13.11]{Rudin}. Since the spectrum of $\Ha_E$ is contained in $(0,\infty)$, we know that $0$ and $-1$ are both in the resolvent set of $\Ha_E$ and thus both $\Ha_E$ and $\Ha_E + 1$ have a bounded and everywhere defined inverse. 
Therefore, also the composition $\Ha_I^{-1} = \Ha_E^{-1}(\Ha_E + 1)^{-1}$ is bounded and well-defined all over $\H$, which shows that $\Ha_I$ is self-adjoint. Moreover, since $\Ha_E^{-1}$ is actually compact, see \cite[Proposition 2]{Seis13} for the case $m>1$ and Proposition \ref{P4} of the appendix for the case $m=1$, it follows that the resolvent $\Ha_I^{-1}$ is compact as a composition of a bounded and a compact operator. The compactness of the resolvent $\Ha_I^{-1}$ immediately yields that the spectrum of $\Ha_I$ is discrete, cf.\ \cite[Prop.\ 2.11]{Schmudgen}.

\end{proof}

\subsection{Computation of the spectrum of $\Ha_I$}\label{S:proof}
We finally turn to the 

\begin{proof}[Proof of Theorem \ref{T1}] We first verify that the eigenvalue problems for $\Ha_I$ and $\Ha_E$ are equivalent in the sense that $\mu_{\ell k}$ is an eigenvalue of $\Ha_I$ if and only if $\lambda_{\ell k}$ is an eigenvalue of $\Ha_E$. In view of the explicit formula \eqref{9aa}, then the corresponding eigenfunctions have to coincide. For this, it is enough to show that every eigenvalue of $\Ha_I$ translates into an eigenvalue of $\Ha_E$. The reverse implication is trivial.
Let $\mu>0$ be an eigenvalue of $\Ha_I$, i.e., there exists a $\psi\in D(\Ha_I)$ such that $\Ha_I\psi=\mu\psi$.  By the definition of $\Ha_I$ in \eqref{9aa}, this eigenvalue problem for $\Ha_I$ can be converted into  an eigenvalue problem for $\Ha_E$, namely
\[
\Ha_E\left(\Ha_E \psi+\eps \psi\right) = \lambda\left(\Ha_E\psi + \eps\psi\right),
\]
where 
\[
\eps= \frac{N(m-1)}2 + \sqrt{\mu(1+N(m-1)) + \left(\frac{N(m-1)}2\right)^2},\quad \lambda = \frac{\mu(1+N(m-1))}{\eps}.
\]
Notice that $-\eps$ is not an eigenvalue of $\Ha_E$ because $\eps>0$. Now, as $0\not=\Ha_E\psi +\eps\psi\in \D(\Ha_E)$ by the definition of $\D(\Ha_I)$, it follows that $\lambda$ is an eigenvalue of $\Ha_E$.

\medskip

In the case $m=1$, the eigenvalues of the Ornstein--Uhlenbeck operator $\Ha_E$ and the corresponding eigenfunctions are computed in Theorem \ref{T2} in the appendix. The eigenvalues and eigenfunctions of $\Ha_E$ in the case $m>1$ were computed in \cite{Seis13}. As we know by Proposition \ref{P1} that the spectrum is purely discrete, this concludes the proof of Theorem \ref{T1}.
\end{proof}

\section*{Appendix: The spectrum of the Ornstein--Uhlenbeck operator in $\H$.}

In this appendix, we compute the spectrum and the corresponding eigenvalues of the Ornstein--Uhlenbeck operator $\Ha_E:\D(\Ha_E)\to \H$ where $m=1$ and thus $\Ha_E\psi(x) = -\laplace \psi(x) + x\cdot\grad\psi(x)$. Spectral properties of the differential operator $-\laplace +x\cdot \grad$ are well-known to the stochastics community because of its role in stochastic processes, and also in the mathematical physics community because $\Ha_E$ is conjugate to the harmonic oscillator. The only difference between the operator studied here and the ``classical'' Ornstein--Uhlenbeck operator is the choice of the underlying Hilbert space. While the standard choice is the Gauss space $L^2(e^{-|x|^2/2} dx)$, we consider its Sobolev variant $\dot H^1(e^{-|x|^2/2} dx) = \H$. We will see, however, that the spectrum of both operators is identical. More precisely, we have the following

\begin{theorem}\label{T2}
The operator $\Ha_E:\D(\Ha)\to \H$ is self-adjoint. Its spectrum $\sigma(\Ha_E)$ is purely discrete and given
\[
\sigma(\Ha_E)=\N.
\]
The corresponding eigenfunctions are Hermite polynomials.
\end{theorem}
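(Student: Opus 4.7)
The strategy is to exhibit the Hermite polynomials as a complete orthogonal system of eigenfunctions of $\Ha_E$ in $\H$, transferring the classical spectral decomposition of the Ornstein--Uhlenbeck operator on the Gauss space $L^2(v_*dx)$. Because $m=1$ forces $v_*\propto e^{-|x|^2/2}$, a direct computation gives the pointwise identity $-\div(v_*\grad\psi)=v_*\Ha_E\psi$, so the integration-by-parts formula
\[
\int v_*\grad\varphi\cdot\grad\psi\,dx=\int \varphi\,(\Ha_E\psi)\,v_*\,dx
\]
holds on $C_b^\infty(\R^N)$ and extends, via the density guaranteed by Lemma~\ref{L1}, to the test classes I will need.

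For the first main step, I recall that the Hermite polynomials $\{H_\alpha\}_{\alpha\in\N_0^N}$ satisfy $\Ha_E H_\alpha=|\alpha|H_\alpha$, where $|\alpha|=\alpha_1+\cdots+\alpha_N$, and form an orthogonal basis of $L^2(v_*dx)$. Since $H_0\equiv 1$ is constant and hence trivial in $\H$, I restrict to $|\alpha|\geq 1$. For any such pair $\alpha,\beta$, the formula above yields
\[
\int v_*\grad H_\alpha\cdot\grad H_\beta\,dx=|\beta|\int H_\alpha H_\beta\,v_*\,dx,
\]
which together with the classical $L^2(v_*)$-orthogonality of distinct Hermite polynomials shows that $\{H_\alpha\}_{|\alpha|\geq 1}$ is an orthogonal family of eigenfunctions of $\Ha_E$ in $\H$ with eigenvalues $|\alpha|\in\N$.

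For completeness, suppose $\psi\in\H$ is $\H$-orthogonal to every $H_\alpha$ with $|\alpha|\geq 1$; the integration-by-parts identity then gives $\int H_\alpha\psi\,v_*\,dx=0$ for all such $\alpha$. The Gaussian Poincar\'e inequality guarantees that the representative of $\psi$ normalized by $\int \psi\,v_*\,dx=0$ lies in $L^2(v_*dx)$ with $\|\psi\|_{L^2(v_*)}\leq \|\psi\|_{\H}$, so the orthogonality relation now covers $\alpha=0$ as well, and completeness of $\{H_\alpha\}$ in $L^2(v_*dx)$ forces $\psi\equiv 0$ in $\H$. Hence $\{H_\alpha/\|H_\alpha\|_{\H}\}_{|\alpha|\geq 1}$ is a complete orthonormal basis of $\H$ consisting of eigenfunctions of $\Ha_E$ with eigenvalues in $\N$. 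This immediately yields $\sigma(\Ha_E)=\N$, pure discreteness, and essential self-adjointness of $\Ha_E$ on the polynomial span.

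The step I expect to require the most care is identifying the maximal domain $\D(\Ha_E)=\{\psi\in H^3_{\loc}(\R^N):\psi,\Ha_E\psi\in\H\}$ stated in the theorem with the abstract spectral domain $\{\sum c_\alpha H_\alpha:\sum|\alpha|^3|c_\alpha|^2\|H_\alpha\|_{L^2(v_*)}^2<\infty\}$ of the self-adjoint closure obtained from the eigenexpansion. One inclusion follows directly by spectral calculus, while the converse requires interior elliptic regularity for the weighted Poisson equation $-\div(v_*\grad\psi)=v_*f$ to promote weak $\H$-solutions to $H^3_{\loc}$, paralleling Proposition~2 of~\cite{Seis13}. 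Since $v_*$ is smooth and strictly positive on compact sets, this regularity upgrade is routine and closes the proof.
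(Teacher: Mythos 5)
Your proof is essentially correct, but it follows a genuinely different route from the paper's. You diagonalize $\Ha_E$ explicitly: the integration-by-parts identity converts $\H$-orthogonality of Hermite polynomials into their classical $L^2(v_*dx)$-orthogonality, and the same identity plus the Gaussian Poincar\'e embedding turns completeness in $L^2(v_*dx)$ into completeness in $\H$, so that self-adjointness and discreteness come out of the spectral theorem for an operator with a complete orthogonal eigenbasis. The paper proceeds in the opposite order: it first establishes self-adjointness of $\Ha_E$ on the stated maximal domain abstractly, by solving the weighted Poisson problem (Lemma \ref{L4}) and invoking the criterion that a symmetric, densely defined, surjective operator with bounded inverse is self-adjoint; it then gets pure discreteness from compactness of the resolvent (Lemma \ref{L3}); and only at the end does it identify the eigenvalues, observing via the embedding $\H\hookrightarrow L^2(v_*dx)$ that any $\H$-eigenfunction is an $L^2(v_*dx)$-eigenfunction of the classical Ornstein--Uhlenbeck operator, and conversely that polynomials lie in $\D(\Ha_E)$. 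The trade-off is instructive: your route is more concrete and makes the eigenbasis structure of $\H$ transparent, but it pushes the real difficulty into the final step --- identifying the maximal domain $\{\psi\in H^3_{\loc}:\psi,\Ha_E\psi\in\H\}$ with the spectral domain of the closure --- which you only sketch; note in particular that the inclusion of the maximal domain into the spectral domain is not pure spectral calculus but requires justifying the integration-by-parts formula \eqref{9b} for arbitrary elements of the maximal domain (i.e., symmetry of the maximal operator), in addition to the elliptic regularity you cite for the reverse inclusion. The paper's surjectivity argument dispatches this domain question in one stroke and, more importantly, is the template that survives for $m>1$, where no classical $L^2$ eigenbasis is available off the shelf. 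Also, be aware that Lemma \ref{L1} gives density of $C_b^\infty$ in $\H$ but does not by itself legitimize integration by parts against the unbounded Hermite polynomials; that step instead uses the weighted bound of Lemma \ref{L2} to control the error terms, as you implicitly do in the completeness argument.
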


For the convenience of the reader, we will sketch the proof of this result in the sequel. It is based on the following two Propositions:

\begin{prop}\label{P3}
The operator $\Ha_E:\D(\Ha_E)\to \H$ is nonnegative , self-adjoint, and has a bounded inverse.
\end{prop}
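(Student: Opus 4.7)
The plan is a direct variational argument in $\H$, organized around four steps: symmetry and nonnegativity via integration by parts, existence of an inverse by Lax--Milgram, elliptic regularity, and Rudin's criterion for self-adjointness.

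First, I would verify symmetry and nonnegativity of $\Ha_E$ on $\D(\Ha_E)$ using the pointwise identity $-\div(v_*\grad\xi)=v_*\Ha_E\xi$ (immediate from $\grad v_* = -xv_*$) together with the integration-by-parts formula \eqref{9b}. Applying the latter with $\xi=\Ha_E\varphi\in\H$ gives, for $\psi,\varphi\in\D(\Ha_E)$,
\[
\langle\psi,\Ha_E\varphi\rangle_{\H}
\;=\;\int v_*\grad\psi\cdot\grad(\Ha_E\varphi)\,dx
\;=\;-\int \Ha_E\varphi\cdot\div(v_*\grad\psi)\,dx
\;=\;\int v_*(\Ha_E\psi)(\Ha_E\varphi)\,dx,
\]
which is manifestly symmetric in $\psi,\varphi$ and nonnegative when $\psi=\varphi$. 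Since $C_b^{\infty}(\R^N)\subset\D(\Ha_E)$ is dense in $\H$ by Lemma~\ref{L1}, $\Ha_E$ is a densely defined, symmetric, nonnegative operator.

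Next, I would construct the inverse by Lax--Milgram. Given $f\in\H$, choose a representative so that $\int f v_*\,dx=0$; this is the solvability condition forced by $\int \Ha_E\psi\cdot v_*\,dx=0$. The Gaussian Poincar\'e inequality $\int(\xi-\bar\xi)^2 v_*\,dx\le\int|\grad\xi|^2 v_*\,dx$ (applied to both $f$ and $\xi$ after passing to zero-mean representatives) implies that $L(\xi):=\int f\xi v_*\,dx$ is a continuous linear functional on $\H$ with $|L(\xi)|\lesssim\|f\|_{\H}\|\xi\|_{\H}$. Since the bilinear form $(\psi,\xi)\mapsto\langle\psi,\xi\rangle_{\H}$ is trivially continuous and coercive on $\H$, Lax--Milgram delivers a unique $\psi\in\H$ with
\[
\int v_*\grad\psi\cdot\grad\xi\,dx \;=\; \int f\xi v_*\,dx \qquad\text{for all } \xi\in\H,
\]
together with the estimate $\|\psi\|_{\H}\lesssim\|f\|_{\H}$. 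Testing against $\xi\in C_c^{\infty}(\R^N)$ shows that $-\div(v_*\grad\psi)=fv_*$ distributionally on $\R^N$; since $v_*$ is smooth and strictly positive, interior elliptic regularity then yields $\psi\in H^3_{\loc}(\R^N)$ and $\Ha_E\psi=f$ pointwise a.e., whence $\Ha_E\psi\in\H$ and $\psi\in\D(\Ha_E)$.

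At this point $\Ha_E:\D(\Ha_E)\to\H$ is symmetric and admits an everywhere defined bounded inverse $\Ha_E^{-1}:\H\to\H$, so self-adjointness follows from \cite[Theorem~13.11]{Rudin}; nonnegativity was recorded in the first step. I expect the main obstacle to be the book-keeping around the quotient structure of $\H$: one must verify that $L$ is unambiguously defined on the quotient (hence the zero-mean normalization on $f$), and that the Lax--Milgram solution, which is determined only up to an additive constant, nonetheless produces a genuine pointwise identity $\Ha_E\psi=f$ via the distributional formulation $-\div(v_*\grad\psi)=fv_*$.
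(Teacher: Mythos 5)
Your proposal is correct and follows essentially the same route as the paper: density of $C_b^{\infty}$ (Lemma~\ref{L1}) plus integration by parts for symmetry and nonnegativity, a variational solution of $\Ha_E\psi=f$ (your Lax--Milgram step is exactly the content of Lemmas~\ref{L2} and~\ref{L4}, with your ``Gaussian Poincar\'e inequality'' being Lemma~\ref{L2}), and then \cite[Theorem~13.11]{Rudin} to conclude self-adjointness from surjectivity. Your explicit elliptic-regularity upgrade to $H^3_{\loc}$ (needed since $\D(\Ha_E)$ demands it, while Lemma~\ref{L4} only records $H^2_{\loc}$) and the care with the quotient structure of $\H$ are welcome details that the paper leaves implicit.
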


\begin{prop}\label{P4}
The operator $\Ha_E: \D(\Ha_E)\to \H$ has a purely discrete spectrum.
\end{prop}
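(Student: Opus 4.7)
The plan is to characterize purely discrete spectrum as compactness of the resolvent. By Proposition~\ref{P3}, $\Ha_E$ is self-adjoint and $0$ lies in its resolvent set, with $\Ha_E^{-1}: \H \to \H$ bounded; therefore the spectrum is purely discrete if and only if this inverse is compact, cf.\ \cite[Prop.\ 2.11]{Schmudgen}. Hence it suffices to establish compactness of $\Ha_E^{-1}$ on $\H$.

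My approach is to transfer the problem from the weighted Sobolev space $\H$ to the Gauss $L^2$ space via the gradient. By the very definition of the $\H$-norm, the map $\grad: \H \to L^2(v_* dx; \R^N)$, $\psi \mapsto \grad \psi$, is an isometric embedding with closed range $\grad\H$ (consisting of the curl-free $L^2$ vector fields with respect to the Gaussian weight). A direct computation using $\partial_i(x\cdot\grad\psi) = \partial_i\psi + x\cdot\grad(\partial_i\psi)$ yields the commutator identity
\[
\grad\Ha_E\psi \;=\; (\Ha_E + 1)\grad\psi,
\]
where $\Ha_E$ acts componentwise on the right. Consequently, under the gradient isometry, $\Ha_E^{-1}$ on $\H$ is unitarily equivalent to $(\Ha_E + 1)^{-1}$ restricted to the closed invariant subspace $\grad\H$ of $L^2(v_* dx;\R^N)$. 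Since the restriction of a compact operator to a closed invariant subspace is compact, it is enough to show that $(\Ha_E + 1)^{-1}$ is compact on the scalar Gauss space $L^2(v_* dx)$.

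To prove the latter I would invoke the similarity transformation $\psi \mapsto v_*^{1/2}\psi$, which is unitary from $L^2(v_* dx)$ onto $L^2(\R^N,dx)$. A short calculation shows that this transformation conjugates $\Ha_E$ into the shifted harmonic oscillator
\[
-\laplace + \frac{|x|^2}{4} - \frac{N}{2}
\]
acting on flat $L^2(\R^N, dx)$. This is a confining Schr\"odinger operator whose compact resolvent is classical: the quadratic-form domain $\{f:\int(|\grad f|^2+|x|^2f^2)\,dx<\infty\}$ embeds compactly into $L^2(\R^N)$ by a standard weighted Rellich-type argument, or equivalently one can invoke the explicit diagonalization by Hermite functions. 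Pulling back through the two intertwinings completes the argument, and yields as a byproduct the Hermite polynomial eigenfunctions claimed in Theorem~\ref{T2}.

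The main obstacle will be the bookkeeping for the gradient embedding: one must check that $\grad\H$ is closed in $L^2(v_* dx;\R^N)$, that the commutator identity extends from the polynomial core to all of $\D(\Ha_E)$, and that $(\Ha_E+1)^{-1}$ preserves $\grad\H$ and coincides there with the gradient-pullback of $\Ha_E^{-1}$. Each of these can be verified by approximating with $C_b^{\infty}(\R^N)$-functions using Lemma~\ref{L1} --- where the commutation is manifest --- and passing to the limit using self-adjointness. Once this is in place, the reduction to the harmonic oscillator closes the proof without further work.
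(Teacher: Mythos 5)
Your proposal is correct in outline, but it takes a genuinely different route from the paper. Both arguments begin the same way: by Proposition~\ref{P3} the operator is self-adjoint with bounded inverse, so discreteness of the spectrum is equivalent to compactness of $\Ha_E^{-1}$ on $\H$. The paper then obtains this compactness directly in the weighted Sobolev setting, from the compact embedding $\H\hookrightarrow L^2(v_*dx)$ established in Lemma~\ref{L3} (a weighted Rellich lemma built on the Poincar\'e-type estimate \eqref{A2}), following the argument of \cite[Prop.\ 2]{Seis13}. You instead exploit the exact intertwining $\grad\Ha_E=(\Ha_E+1)\grad$, which is special to the Ornstein--Uhlenbeck case $m=1$, to identify $\Ha_E^{-1}$ on $\H$ with $(\Ha_E+1)^{-1}$ restricted to the closed subspace $\grad\H$ of $L^2(v_*dx;\R^N)$, and then reduce to the classical compact-resolvent statement for the harmonic oscillator $-\laplace+|x|^2/4-N/2$ via the ground-state unitary $\psi\mapsto v_*^{1/2}\psi$ (your conjugation formula is correct). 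What your route buys: it makes transparent why $\sigma(\Ha_E)$ on $\H$ is $\{1,2,3,\dots\}$ rather than $\{0,1,2,\dots\}$ (the shift by $1$ and the disappearance of the constant mode), and it connects directly to classical harmonic-oscillator theory, so Theorem~\ref{T2} falls out as a byproduct. What the paper's route buys: it stays entirely parallel to the $m>1$ case of \cite{Seis13}, reuses Lemmas~\ref{L2} and \ref{L3} which are needed elsewhere anyway, and avoids the domain bookkeeping you correctly flag --- namely verifying that for every $\psi\in\D(\Ha_E)$ each $\partial_i\psi$ lies in the domain of the self-adjoint realization of the Ornstein--Uhlenbeck operator on Gauss $L^2$ (not merely that the differential expression lands in $L^2(v_*dx)$), and that $-1$ lies in its resolvent set so that $(\Ha_E+1)^{-1}$ is defined on all of $\grad\H$. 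These verifications are standard but not free; your proposed strategy of approximating on the core $C_b^\infty(\R^N)$ from Lemma~\ref{L1} and passing to the limit is the right way to close them.
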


\begin{proof}[Proof of Theorem \ref{T2}]
We immediately deduce from Propositions \ref{P3} and \ref{P4} that the spectrum of $\Ha_E$ is a discrete subset of $(0,\infty)$, and thus it is enough to solve the eigenvalue problem for $\Ha_E$. On the one hand, one can easily show that $\H$ embeds continuously into the Gauss space $L^2(e^{-|x|^2/2} dx)  = L^2(v_*dx)$, cf.\ Lemma \ref{L2} below, and thus every eigenvalue must be an eigenvalue of the ``classical'' operator defined on the Hilbert space $L^2(v_*dx)$. It is well-known that the eigenvalues of the Ornstein--Uhlenbeck operator on $L^2(v_*dx)$ are all positive integers and the corresponding eigenfunctions are Hermite polynomials, cf.\ \cite{Sjogren97}. It is easily checked that every polynomial lies in the domain of $\Ha_E$. Therefore, we conclude that Hermite polynomials are eigenfunctions of $\Ha_E$ and thus $\sigma(\Ha_E)=\N$.
\end{proof}

Before turning to the proofs of Propositions \ref{P3} and \ref{P4}, we derive some auxiliary results:

\begin{lemma}\label{L1}
$C_b^{\infty}(\R^N)$ is dense in $\H$.
\end{lemma}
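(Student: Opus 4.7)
The plan is a two-stage approximation---spatial cutoff followed by standard mollification---preceded by fixing a representative of the equivalence class. Since $\H$ identifies functions differing by an additive constant, I first subtract the weighted mean $\bar\psi = \int \psi\, v_*\, dx$ (recall $v_*$ is a normalized Gaussian, so the weight has finite mass). The Gaussian Poincaré inequality
\[
\int (\psi - \bar\psi)^2 v_*\, dx \;\leq\; C \int |\grad \psi|^2 v_*\, dx,
\]
a classical fact for the standard Gaussian measure, then places the chosen representative $\psi - \bar\psi$ in $L^2(v_*\, dx)$. This is the single non-routine ingredient of the proof; without it, the cutoff step below cannot be closed.

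Next, I truncate spatially. Let $\chi_R \in C_c^{\infty}(\R^N)$ satisfy $\chi_R = 1$ on $B_R$, $\spt \chi_R \subset B_{2R}$, and $|\grad \chi_R| \leq C/R$, and set $\psi_R := \chi_R \psi$. Then
\[
\grad \psi_R - \grad \psi \;=\; (\chi_R - 1)\, \grad \psi \;+\; \psi\, \grad \chi_R .
\]
The first summand tends to zero in $L^2(v_*\, dx)$ by the dominated convergence theorem (pointwise convergence, dominated by $|\grad \psi| \in L^2(v_*\, dx)$), while the second obeys
\[
\int |\psi \grad \chi_R|^2 v_*\, dx \;\leq\; \frac{C^2}{R^2} \int |\psi|^2 v_*\, dx \;\longrightarrow\; 0 \quad \text{as } R \to \infty,
\]
thanks precisely to the $L^2(v_*\, dx)$-bound from the Poincaré step. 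Hence $\psi_R \to \psi$ in $\H$, and each $\psi_R$ is compactly supported.

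With compact support secured, I mollify: for a standard mollifier $\eta_{\eps}$, the convolution $\psi_R * \eta_{\eps}$ lies in $C_c^{\infty}(\R^N) \subset C_b^{\infty}(\R^N)$. Since $\psi_R \in H^1(\R^N)$ has support in $B_{2R}$ and the weight $v_*$ is bounded above on $\R^N$, the classical $H^1$-convergence of mollifications on compact sets yields $\|\psi_R * \eta_{\eps} - \psi_R\|_{\H} \to 0$ as $\eps \to 0$. A diagonal argument---choose $R_n \to \infty$ and then $\eps_n \to 0$ small enough that $\|\psi_{R_n} * \eta_{\eps_n} - \psi\|_{\H} \to 0$---delivers the required approximating sequence in $C_b^{\infty}(\R^N)$. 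The main obstacle, as indicated, is the tail behavior of $\psi$ against the Gaussian weight; the Poincaré inequality resolves it, and the remaining steps are routine.
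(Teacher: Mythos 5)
Your overall architecture (fix a representative, cut off in space, mollify) is viable, but the first step has a genuine gap. An element $\psi\in\H$ is a priori only \emph{locally} integrable with $\int v_*|\grad\psi|^2\,dx<\infty$; it is not known to lie in $L^1(v_*\,dx)$, so the mean $\bar\psi=\int\psi v_*\,dx$ is not yet well defined, and it is not known to lie in $L^2(v_*\,dx)$, which is the hypothesis under which the ``classical'' Gaussian Poincar\'e inequality is stated. The version you actually need --- for merely locally integrable $\psi$ with finite weighted Dirichlet energy, some constant $c$ makes $\psi-c\in L^2(v_*\,dx)$ with the stated bound --- is essentially Lemma \ref{L2} of this paper, and the paper proves that lemma \emph{from} the density Lemma \ref{L1}. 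So, as written, your argument either assumes a statement logically downstream of the one being proved, or silently assumes integrability properties of $\psi$ that are part of what must be established. The standard repair is to first truncate in the range, $\psi_M=\max\{-M,\min\{M,\psi\}\}$: then $\psi_M$ is bounded (hence trivially in $L^2(v_*\,dx)$), $\grad\psi_M=\mathbbm{1}_{\{|\psi|<M\}}\grad\psi$, one may legitimately apply the classical Poincar\'e inequality to $\psi_M$ uniformly in $M$, and pass to the limit by Fatou. But this truncation is precisely the device the paper uses, and once you have it the Poincar\'e inequality and the spatial cutoff become superfluous.

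Indeed, the paper's route avoids the issue entirely: it truncates in the range, notes that
\[
\int v_*|\grad\psi-\grad\psi_M|^2\,dx=\int_{\{|\psi|\ge M\}}v_*|\grad\psi|^2\,dx\longrightarrow 0
\]
by dominated convergence (no control on $\psi$ itself against the weight is needed), and then mollifies the bounded truncations. Your spatial-cutoff route buys nothing extra here and costs you the Poincar\'e input; the remaining steps of your proposal (the splitting of $\grad(\chi_R\psi)-\grad\psi$, the $H^1_{\loc}$ mollification using that $v_*$ is bounded above and below on compacts, and the diagonal extraction) are all correct once the $L^2(v_*\,dx)$ membership is secured.
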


This is a fairly standard result and we therefore only sketch its proof.

\begin{proof}
We first observe that $L^{\infty}(\R^N)\cap \H$ is dense in $\H$, which can be easily seen by considering the truncated functions $\psi_M=\max\{-M,\min\{M,\psi\}\}$ for $M>0$. It holds that
\[
\lim_{M\uparrow\infty}\int v_* |\grad \psi-\grad\psi_M|^2\, dx\;=\;\lim_{M\uparrow\infty} \int_{|\psi|\ge M} v_*|\grad \psi|^2\, dx\;=\; 0
\]
by the dominated convergence theorem. The density of $C_b^{\infty}(\R^N)$ in $\H$ then follows by a standard mollification argument, see, e.g., \cite[Lemma 2]{Seis13}.
\end{proof}

\begin{lemma}\label{L2}
There exists a constant $C>0$ dependent only on the space dimension $N$ such that for all $\psi\in\H$
\begin{equation}\label{A2}
\inf_{c\in\R} \int (1 +|x|^2)v_* (\psi-c)^2\, dx\;\le\;  C\int v_* |\grad\psi|^2\, dx
\end{equation}
holds.
\end{lemma}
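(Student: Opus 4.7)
The plan is to combine two classical ingredients: the standard Gauss--Poincar\'e inequality and a weighted moment estimate for the multiplier $|x|^2$ obtained by integration by parts exploiting the identity $\grad v_* = -x v_*$ (which holds since, by \eqref{10b}, $v_*$ is proportional to $e^{-|x|^2/2}$).

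First I would work with $\psi\in C_b^{\infty}(\R^N)$ and set $f=\psi-c$ for an arbitrary $c\in\R$. Integration by parts against the Gaussian gives
\begin{align*}
\int |x|^2 v_* f^2\,dx
&= -\int (x\cdot \grad v_*)\,f^2\,dx
 = \int v_*\,\div(xf^2)\,dx \\
&= N\int v_* f^2\,dx + 2\int v_* f\,(x\cdot\grad f)\,dx,
\end{align*}
where the boundary terms at infinity vanish by the rapid decay of $v_*$ and the boundedness of $f$. Applying Cauchy--Schwarz to the last integral and then Young's inequality $2ab\le \tfrac12 a^2 + 2b^2$ with $a^2=\int |x|^2 v_* f^2\,dx$ and $b^2=\int v_*|\grad f|^2\,dx$ lets one absorb half of the left-hand side, yielding
\[
\int |x|^2 v_* f^2\,dx \;\le\; 2N\int v_* f^2\,dx + 4\int v_*|\grad f|^2\,dx.
\]
Choosing $c=\bar\psi := \int\psi\,v_*\,dx / \int v_*\,dx$ (the Gaussian mean) and invoking the classical Gauss--Poincar\'e inequality $\int v_*(\psi-\bar\psi)^2\,dx \le C_1\int v_*|\grad\psi|^2\,dx$, then adding $\int v_* f^2\,dx$ to both sides of the previous display, one arrives at \eqref{A2} for any $\psi\in C_b^{\infty}(\R^N)$, with constant $C=(1+2N)C_1+4$.

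Next I would extend from $C_b^{\infty}(\R^N)$ to general $\psi\in\H$ by density (Lemma \ref{L1}): pick $\psi_n\in C_b^{\infty}(\R^N)$ with $\|\psi_n-\psi\|_{\H}\to 0$ and let $c_n$ be the Gaussian mean of $\psi_n$. The inequality applied to $\psi_n$ produces a uniform bound on $\psi_n-c_n$ in the weighted Hilbert space $L^2((1+|x|^2)v_*\,dx)$. Along a subsequence, $\psi_n-c_n$ then converges weakly in this space to some $g$; since $\grad(\psi_n-c_n)=\grad\psi_n\to\grad\psi$ in $L^2(v_*\,dx)$, the weak limit satisfies $\grad g = \grad\psi$ and hence $g=\psi-c$ for some $c\in\R$. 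Lower semicontinuity of the weighted $L^2$-norm under weak convergence then passes the inequality to the limit.

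The main obstacle lies in this extension step, since Lemma \ref{L2} is itself the tool that would ordinarily give $L^2(v_*)$-integrability of an $\H$-function; one must therefore be careful not to invoke it in the identification of the limit constant $c$. Working with the differences $\psi_n-c_n$ (whose control in weighted $L^2$ is supplied uniformly by the inequality applied to the smooth approximants), together with the $L^2(v_*)$-convergence of gradients, circumvents this circularity and closes the argument.
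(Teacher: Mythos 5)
Your proof is correct, but it takes a genuinely different route from the paper on the key point. The moment identity is the same in both arguments: you integrate $|x|^2 v_* f^2$ by parts using $\grad v_* = -xv_*$ and absorb half of the left-hand side via Young's inequality, exactly as in the paper's estimate \eqref{A1}. The divergence comes afterwards: you control the remaining term $\int v_* f^2\,dx$ by \emph{citing} the classical Gauss--Poincar\'e inequality with $c$ the Gaussian mean, whereas the paper keeps the argument self-contained by choosing $c$ to be the average of $\psi$ over a ball $B_R(0)$, applying the Euclidean Poincar\'e inequality on that ball, bounding the exterior contribution by $R^{-2}\int |x|^2 v_*\psi^2\,dx$, and absorbing this tail into the left-hand side for $R$ large. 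In effect the paper \emph{derives} the Gaussian Poincar\'e-type bound from the local one, while you import it as a known fact; your version is shorter, the paper's is self-contained and mirrors the technique used for the compactly supported case $m>1$ in the cited reference. On the extension to general $\psi\in\H$, you are actually more careful than the paper (which dismisses this step with one sentence): your weak-compactness argument in $L^2((1+|x|^2)v_*\,dx)$, with identification of the limit via $\grad g=\grad\psi$ and lower semicontinuity of the norm, correctly avoids the circularity of presupposing $L^2(v_*)$-integrability of $\psi$, and is a legitimate way to close the density step.
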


\begin{proof}In the following $C>0$ will always denote a universal constant (eventually dependent on $N$) whose value may change from line to line. Thanks to the density of smooth functions provided in the previous lemma, it is enough to prove the statement for $\psi\in C^{\infty}_b(\R^N)$. We first show that
\begin{equation}\label{A1}
\int (1+ |x|^2) e^{-|x|^2/2} \psi^2\, dx\;\le\; C\left(\int e^{-|x|^2/2} \psi^2\, dx + \int e^{-|x|^2/2} |\grad\psi|^2\, dx\right).
\end{equation}
Indeed, because $\div\left(xe^{-|x|^2/2}\right)  = \left(N- |x|^2\right)e^{-|x|^2/2}$, we have that
\begin{eqnarray*}
\int |x|^2 e^{-|x|^2/2} \psi^2\, dx &=&  - \int \div\left(xe^{-|x|^2/2} \right)\psi^2\, dx + N \int e^{-|x|^2/2} \psi^2\, dx \\
&=& 2\int xe^{-|x|^2/2} \psi\cdot\grad\psi\, dx + N  \int e^{-|x|^2/2} \psi^2\, dx,
\end{eqnarray*}
and we have integrated by parts in the second identity. We apply Young's inequality $2ab\le a^2 + b^2$ to deduce
\[
\int |x|^2 e^{-|x|^2/2} \psi^2\, dx\;\le\; C\left(\int e^{-|x|^2/2} \psi^2\, dx + \int e^{-|x|^2/2} |\grad\psi|^2\, dx\right).
\]
From this, \eqref{A1} follows upon adding $\int e^{-|x|^2/2} \psi^2\, dx$ on both sides of the inequality.

\medskip

We now turn to the proof of \eqref{A2}. We prove a slightly stronger statement by choosing $c= \int_{B_R(0)} \psi\, dx$ for some $R>0$ that has to be fixed later. Equivalently, we may assume that
\[
\int_{B_R(0)} \psi\, dx\;=\;0.
\]
Then the Poincar\'e estimate on the ball $B_R(0)$ reads
\[
\int_{B_R(0)} \psi^2\, dx\;\le\;C R^2 \int_{B_R(0)} |\grad\psi|^2\, dx,
\]
and thus
\[
\int_{B_R(0)} e^{-|x|^2/2} \psi^2\, dx\;\le\; \int_{B_R(0)} \psi^2\, dx\;\le\; Ce^{R^2/2}R^2 \int e^{-|x|^2/2}  |\grad\psi|^2\, dx.
\]
On the other hand, we also have that
\[
\int_{\R^N\setminus B_R(0)}  e^{-|x|^2/2} \psi^2\, dx\;\le \;\frac{1}{R^2} \int |x|^2 e^{-|x|^2/2} \psi^2\, dx.
\]
Consequently, combining the last two estimates with \eqref{A1} yields
\begin{eqnarray*}
\lefteqn{\int (1+|x|^2)e^{-|x|^2/2} \psi^2\, dx}\\
&\le&C\left(\frac1{R^2}\int |x|^2 e^{-|x|^2/2} \psi^2\, dx + e^{R^2/2}R^2\int e^{-|x|^2/2}|\grad\psi|^2\,dx\right).
\end{eqnarray*} 
Choosing $R$ sufficiently large (uniformly in $\psi$), we see that the first term on the right can be absorbed into the left-hand side of the inequality, which yields the statement of the lemma by the definition of $v_*$.
\end{proof}

\begin{lemma}\label{L3}
The embedding of $\H$ in $L^2(v_*dx)$ is compact.
\end{lemma}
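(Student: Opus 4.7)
My plan is to combine the weighted Poincar\'e-type estimate from Lemma~\ref{L2} with the classical Rellich--Kondrachov compactness theorem, the former giving uniform tightness at infinity and the latter providing local compactness on every ball. The overall scheme is a standard ``tightness plus local compactness'' argument, tailored to the Gaussian weight.

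First, given a bounded sequence $(\psi_k)\subset\H$, I would use Lemma~\ref{L2} to select, for each $k$, an additive constant $c_k\in\R$ that minimises the left-hand side of \eqref{A2}; since elements of $\H$ are only defined modulo constants, replacing $\psi_k$ by $\psi_k-c_k$ is harmless, and yields a uniform bound of the form
\[
\int (1+|x|^2)\, v_* (\psi_k-c_k)^2\, dx \;\le\; C\|\psi_k\|_{\H}^2.
\]
Next, since $v_* = c_0 e^{-|x|^2/2}$ is bounded above and bounded below by positive constants on any fixed ball $B_R(0)$, the restrictions of $\psi_k-c_k$ to $B_R(0)$ are uniformly bounded in the ordinary Sobolev space $H^1(B_R(0))$. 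Rellich--Kondrachov then extracts a subsequence converging strongly in $L^2(B_R(0))$, hence in $L^2(B_R(0); v_*\, dx)$. A standard Cantor diagonal extraction across $R=1,2,\dots$ produces a further subsequence $(\psi_{k_j}-c_{k_j})$ and a function $\psi\in L^2_{\loc}(v_*\, dx)$ such that $\psi_{k_j}-c_{k_j}\to\psi$ in $L^2_{\loc}(v_*\, dx)$.

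The tightness at infinity is then free from the weight: the above bound yields, uniformly in $k$,
\[
\int_{|x|>R} v_* (\psi_k-c_k)^2\, dx \;\le\; \frac{C}{1+R^2}\|\psi_k\|_{\H}^2,
\]
which combined with the local convergence upgrades $\psi_{k_j}-c_{k_j}\to\psi$ to convergence in all of $L^2(v_*\, dx)$.

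The main subtlety to watch is the reconciliation of the quotient structure of $\H$ (where additive constants are identified) with $L^2(v_*\, dx)$ (where they are not, since $v_*$ is integrable): one has to pin down a canonical representative of each equivalence class before even speaking of convergence in $L^2(v_*\, dx)$. The minimising constants $c_k$ supplied by Lemma~\ref{L2} play exactly this role, and once this choice is fixed the rest of the argument is routine. I do not foresee any other serious obstruction.
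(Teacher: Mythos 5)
Your argument is correct and follows essentially the same route as the paper's proof: normalize the additive constant using Lemma~\ref{L2}, apply Rellich--Kondrachov on balls (where $v_*$ is bounded above and below), and control the tail $\int_{|x|>R} v_*(\cdot)^2\,dx$ by $(1+R^2)^{-1}$ times the weighted bound from \eqref{A2}. The only cosmetic difference is that you identify the limit via a diagonal extraction over balls, whereas the paper first extracts a weak limit in $\H$ and then applies Rellich along a single subsequence; both are routine and equivalent here.
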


\begin{proof}We deduce the statement of this lemma from the standard Rellich compactness lemma for classical Sobolev functions on bounded domains and from estimate \eqref{A2}. Let $\{\psi_n\}_{n\in\N}$ denote a bounded sequence in $\H$. It is convenient to assume that $\int(1+|x|^2)v_*\psi\, dx=0$, because then \eqref{A2} holds with $c=0$. Since $\H$ is a Hilbert space, there exists a $\psi\in\H$ and a subsequence which converges to $\psi$ weakly in $\H$. By the continuous embedding provided by \eqref{A2}, this weak convergence also holds in $L^2((1+|x|^2)v_*dx)$. Moreover, since $v_*$ is bounded away from zero on every compact subset of $\R^N$, it holds that $\{\psi_n\}_{n\in\N}$ is bounded in $H^1(B_k(0))$ for every $k\in\N$. Therefore, by the standard Rellich compactness lemma and since $v_*\lesssim1$, we can extract a further subsequence $\{\psi_{n_k}\}_{k\in\N}$ such that
\begin{equation}\label{A3}
\int_{B_k(0)} v_* (\psi-\psi_{n_k})^2\, dx\;\le\; \frac1k
\end{equation}
for every $k\in\N$. We now have
\begin{eqnarray*}
\int v_*(\psi-\psi_{n_k})^2\, dx &=& \int_{B_k(0)} v_*(\psi-\psi_{n_k})^2\, dx + \int_{\R^N\setminus B_k(0)} v_*(\psi-\psi_{n_k})^2\, dx\\
&\stackrel{\eqref{A3}}{\le}& \frac1k + \frac1{k^2} \int (1+|x|^2)  v_*(\psi-\psi_{n_k})^2\, dx.
\end{eqnarray*}
Since the integral on the right-hand side of the above inequality is bounded by the embedding \eqref{A2}, we deduce that
\[
\int v_*(\psi-\psi_{n_k})^2\, dx\;\le\; \frac1k +\frac{C}{k^2}
\]
for some uniform constant $C>0$. We let $k$ converge to infinity to obtain the desired result.
\end{proof}

\begin{lemma}\label{L4}
For every $u\in L^2(v_*^{-1}dx)$ with $\int u\, dx=0$, there exists a unique $\psi\in H^2_{\loc}\cap \H$ such that
\begin{equation}\label{A5}
\int v_*\grad\psi\cdot \grad\varphi\, dx\;=\; \int u\varphi\, dx
\end{equation}
for all $\varphi\in\H$.
\end{lemma}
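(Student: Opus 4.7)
The plan is to realize \eqref{A5} as the Riesz representation of a bounded linear functional on the Hilbert space $\H$, and then to upgrade the weak solution to $H^2_{\loc}$ by standard interior elliptic regularity.

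First, I would note that the bilinear form
\[
a(\psi,\varphi) := \int v_*\grad\psi\cdot\grad\varphi\, dx
\]
is precisely the inner product on $\H$, which is a Hilbert space (with the convention that functions differing by additive constants are identified). Next, I would verify that the linear functional $F(\varphi) := \int u\varphi\, dx$ is well defined on equivalence classes in $\H$: since $\int u\, dx = 0$ by hypothesis, replacing $\varphi$ by $\varphi+c$ for any $c\in\R$ leaves $F(\varphi)$ unchanged.

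The heart of the argument — and the step I expect to be the main obstacle — is to prove that $F$ is bounded on $\H$. For this I would use the hypothesis $u\in L^2(v_*^{-1}dx)$ and the weighted Poincar\'e inequality from Lemma \ref{L2}. For any $\varphi\in\H$, pick the optimal constant $c=c(\varphi)\in\R$ furnished by \eqref{A2}. Since $\int u\, dx=0$, Cauchy--Schwarz gives
\[
|F(\varphi)|  =  \left|\int u(\varphi-c)\, dx\right|  \le  \left(\int v_*^{-1} u^2 \, dx\right)^{1/2}\left(\int v_* (\varphi - c)^2\, dx\right)^{1/2},
\]
and then \eqref{A2} yields
\[
|F(\varphi)|  \le  \sqrt{C}\,\|u\|_{L^2(v_*^{-1}dx)}\,\|\varphi\|_{\H}.
\]
Thus $F$ is a bounded linear functional on $\H$, and the Riesz representation theorem produces a unique $\psi\in\H$ satisfying $a(\psi,\varphi) = F(\varphi)$ for every $\varphi\in\H$, which is exactly \eqref{A5}.

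It remains to establish the local regularity $\psi\in H^2_{\loc}$. Restricting test functions $\varphi$ to $C^{\infty}_c(\R^N)$, the weak identity \eqref{A5} is just the distributional equation
\[
-\div(v_*\grad\psi) = u\quad\mbox{on }\R^N.
\]
Since $v_* = e^{-|x|^2/2}$ (up to the normalization in \eqref{10b}) is smooth and strictly positive, this is a uniformly elliptic equation with smooth coefficients on every ball $B_R(0)$, and the right-hand side $u$ lies in $L^2_{\loc}$ because $v_*^{-1}$ is locally bounded. Standard interior $H^2$ regularity for second-order elliptic operators (e.g.\ the classical difference quotient argument) then yields $\psi\in H^2(B_R(0))$ for every $R>0$, completing the proof.
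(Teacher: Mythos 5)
Your proof is correct and is essentially the paper's argument: representing the bounded functional $\varphi\mapsto\int u\varphi\,dx$ on the Hilbert space $(\H,a)$ via Riesz is the same soft step as the paper's direct minimization of the convex functional $\frac12\int v_*|\grad\psi|^2\,dx-\int u\psi\,dx$, and both hinge on the weighted Poincar\'e inequality of Lemma \ref{L2} to control the linear term. You additionally spell out the interior elliptic regularity yielding $\psi\in H^2_{\loc}$, a step the paper leaves implicit.
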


\begin{proof}
We first observe that \eqref{A5} are the Euler--Lagrange equations for the convex energy functional
\[
\F(\psi) = \frac12\int v_* |\grad\psi|^2\, dx -\int u\psi\, dx
\]
defined for $\psi\in\H$. Existence and uniqueness of a minimizers follows from soft methods based on the continuous embedding $\H\subset L^2(v_*dx)$ established in Lemma \ref{L2}, whose details we omit as they are fairly standard.
\end{proof}

We are now in the position to prove Propositions \ref{P3} and \ref{P4}. As both statements are established very similarly to the analogous statements Propositions 1 and 2 from \cite{Seis13}, we again omit most of the details. We start with the 

\begin{proof}[Proof of Proposition \ref{P3}] The proof of this proposition is very close to the one of \cite[Prop.\ 1]{Seis13}. By Lemma \ref{L1}, the operator $\Ha_E:\D(\Ha_E)\to\H$ is densely defined. A simple integration-by-parts argument shows that $\Ha_E$ is nonnegative and symmetric. Moreover, via Lemmas \ref{L2} and \ref{L4} it can be shown that $\Ha_E$ is onto, which in turn implies that $\Ha_E$ is self-adjoint and has a bounded inverse via arguments from Functional Analysis (see \cite[Theorem 13.11]{Rudin}).
\end{proof}

It remains to provide the

\begin{proof}[Proof of Proposition \ref{P4}]
By Proposition \ref{P3}, $\Ha_E$ is invertible and has a bounded inverse. To prove the discreteness of the spectrum, we have to show that the resolvent $\Ha_E^{-1}: \H\to\H$ is compact, cf.\ \cite[Prop.\ 2.11]{Schmudgen}. This, however, is a consequence of the Rellich compactness established in Lemma \ref{L3}. We omit details and refer the interested reader to \cite[Prop.\ 2]{Seis13} for a similar argument.

\end{proof}

\bibliography{tfe_lit}

\def\cprime{$'$} \def\cprime{$'$}
\begin{thebibliography}{10}

\bibitem{Abramowitz}
{\sc Abramowitz, M., and Stegun, I.~A.}, Eds.
\newblock {\em Handbook of mathematical functions with formulas, graphs, and
  mathematical tables}.
\newblock Dover Publications Inc., New York, 1992.
\newblock Reprint of the 1972 edition.

\bibitem{AGS}
{\sc Ambrosio, L., Gigli, N., and Savar{\'e}, G.}
\newblock {\em Gradient flows in metric spaces and in the space of probability
  measures}, second~ed.
\newblock Lectures in Mathematics ETH Z\"urich. Birkh\"auser Verlag, Basel,
  2008.

\bibitem{LidiaGiacomelli04}
{\sc Ansini, L., and Giacomelli, L.}
\newblock Doubly nonlinear thin-film equations in one space dimension.
\newblock {\em Arch. Ration. Mech. Anal. 173}, 1 (2004), 89--131.

\bibitem{Barenblatt52}
{\sc Barenblatt, G.~I.}
\newblock On self-similar motions of a compressible fluid in a porous medium.
\newblock {\em Akad. Nauk SSSR. Prikl. Mat. Meh. 16\/} (1952), 679--698.

\bibitem{BealsWong10}
{\sc Beals, R., and Wong, R.}
\newblock {\em Special functions}, vol.~126 of {\em Cambridge Studies in
  Advanced Mathematics}.
\newblock Cambridge University Press, Cambridge, 2010.
\newblock A graduate text.

\bibitem{BenamouBrenier00}
{\sc Benamou, J.-D., and Brenier, Y.}
\newblock A computational fluid mechanics solution to the {M}onge-{K}antorovich
  mass transfer problem.
\newblock {\em Numer. Math. 84}, 3 (2000), 375--393.

\bibitem{BernisFriedman90}
{\sc Bernis, F., and Friedman, A.}
\newblock Higher order nonlinear degenerate parabolic equations.
\newblock {\em J. Differential Equations 83}, 1 (1990), 179--206.

\bibitem{BernisPeletierWilliams92}
{\sc Bernis, F., Peletier, L., and Williams, S.}
\newblock Source type solutions of a fourth order nonlinear degenerate
  parabolic equation.
\newblock {\em Nonlinear Analysis 18\/} (1992), 217--234.

\bibitem{BernoffWitelski02}
{\sc Bernoff, A.~J., and Witelski, T.~P.}
\newblock Linear stability of source-type similarity solutions of the thin film
  equation.
\newblock {\em Appl. Math. Lett. 15}, 5 (2002), 599--606.

\bibitem{BertschDalPassoGarckeGrun98}
{\sc Bertsch, M., Dal~Passo, R., Garcke, H., and Gr{\"u}n, G.}
\newblock The thin viscous flow equation in higher space dimensions.
\newblock {\em Adv. Differential Equations 3}, 3 (1998), 417--440.

\bibitem{CarrilloToscani00}
{\sc Carrillo, J.~A., and Toscani, G.}
\newblock Asymptotic {$L^1$}-decay of solutions of the porous medium equation
  to self-similarity.
\newblock {\em Indiana Univ. Math. J. 49}, 1 (2000), 113--142.

\bibitem{CarrilloToscani02}
{\sc Carrillo, J.~A., and Toscani, G.}
\newblock Long-time asymptotics for strong solutions of the thin film equation.
\newblock {\em Comm. Math. Phys. 225}, 3 (2002), 551--571.

\bibitem{DelPinoDolbeault02}
{\sc Del~Pino, M., and Dolbeault, J.}
\newblock Best constants for {G}agliardo-{N}irenberg inequalities and
  applications to nonlinear diffusions.
\newblock {\em J. Math. Pures Appl. (9) 81}, 9 (2002), 847--875.

\bibitem{DenzlerKochMcCann13}
{\sc Denzler, J., Koch, H., and McCann, R.~J.}
\newblock Higher order time asymptotics of fast diffusion in {E}uclidean space
  (via dynamical systems methods).
\newblock {\em To appear in Mem. Amer. Math. Soc.\/}.
\newblock Preprint arXiv:1204.6434.

\bibitem{DenzlerMcCann05}
{\sc Denzler, J., and McCann, R.~J.}
\newblock Fast diffusion to self-similarity: complete spectrum, long-time
  asymptotics, and numerology.
\newblock {\em Arch. Ration. Mech. Anal. 175}, 3 (2005), 301--342.

\bibitem{DenzlerMcCann08}
{\sc Denzler, J., and McCann, R.~J.}
\newblock Nonlinear diffusion from a delocalized source: affine
  self-similarity, time reversal, \& nonradial focusing geometries.
\newblock {\em Ann. Inst. H. Poincar\'e Anal. Non Lin\'eaire 25}, 5 (2008),
  865--888.

\bibitem{DLSS91a}
{\sc Derrida, B., Lebowitz, J.~L., Speer, E.~R., and Spohn, H.}
\newblock Dynamics of an anchored {T}oom interface.
\newblock {\em J. Phys. A 24}, 20 (1991), 4805--4834.

\bibitem{DLSS91b}
{\sc Derrida, B., Lebowitz, J.~L., Speer, E.~R., and Spohn, H.}
\newblock Fluctuations of a stationary nonequilibrium interface.
\newblock {\em Phys. Rev. Lett. 67}, 2 (1991), 165--168.

\bibitem{FerreiraBernis97}
{\sc Ferreira, R., and Bernis, F.}
\newblock Source-type solutions to thin-film equations in higher dimensions.
\newblock {\em European J. Appl. Math. 8}, 5 (1997), 507--524.

\bibitem{GiacomelliOtto01}
{\sc Giacomelli, L., and Otto, F.}
\newblock Variational formulation for the lubrication approximation of the
  {H}ele-{S}haw flow.
\newblock {\em Calc. Var. Partial Differential Equations 13}, 3 (2001),
  377--403.

\bibitem{GiacomelliOtto03}
{\sc Giacomelli, L., and Otto, F.}
\newblock Rigorous lubrication approximation.
\newblock {\em Interfaces Free Bound. 5}, 4 (2003), 483--529.

\bibitem{GianazzaSavareToscani09}
{\sc Gianazza, U., Savar{\'e}, G., and Toscani, G.}
\newblock The {W}asserstein gradient flow of the {F}isher information and the
  quantum drift-diffusion equation.
\newblock {\em Arch. Ration. Mech. Anal. 194}, 1 (2009), 133--220.

\bibitem{Groemer96}
{\sc Groemer, H.}
\newblock {\em Geometric applications of {F}ourier series and spherical
  harmonics}, vol.~61 of {\em Encyclopedia of Mathematics and its
  Applications}.
\newblock Cambridge University Press, Cambridge, 1996.

\bibitem{Grun04}
{\sc Gr\"un, G.}
\newblock {Droplet spreading under weak slippage --- Existence for the Cauchy
  problem}.
\newblock {\em Comm. Partial Differential Equations 29}, 11-12 (2004),
  1697--1744.

\bibitem{JordanKinderlehrerOtto98}
{\sc Jordan, R., Kinderlehrer, D., and Otto, F.}
\newblock {The variational formulation of the Fokker-Planck equation}.
\newblock {\em SIAM J. Math. Anal. 29}, 1 (1998), 1--17.

\bibitem{JungelPinnau00}
{\sc J\"ungel, A., and Pinnau, R.}
\newblock Global nonnegative solutions of a nonlinear fourth-order parabolic
  equation for quantum systems.
\newblock {\em SIAM J. Math. Anal. 32\/} (2000), 760--777.

\bibitem{Koch}
{\sc Koch, H.}
\newblock {\em Non-{E}uclidean singular integrals and the porous medium
  equation}.
\newblock PhD thesis, Habilitation thesis, Universit\"at Heidelberg, Germany,
  1999.

\bibitem{MatthesMcCannSavare09}
{\sc Matthes, D., McCann, R.~J., and Savar{\'e}, G.}
\newblock A family of nonlinear fourth order equations of gradient flow type.
\newblock {\em Comm. Partial Differential Equations 34}, 10-12 (2009),
  1352--1397.

\bibitem{McCann97}
{\sc McCann, R.~J.}
\newblock A convexity principle for interacting gases.
\newblock {\em Adv. Math. 128}, 1 (1997), 153--179.

\bibitem{Myers98}
{\sc Myers, T.}
\newblock Thin films with high surface tension.
\newblock {\em SIAM Reviews 40\/} (1998), 441--462.

\bibitem{OronDavisBankoff97}
{\sc Oron, A., Davis, S.~H., and Bankoff, S.~G.}
\newblock Long-scale evolution of thin liquid films.
\newblock {\em Rev. Mod. Phys. 69\/} (Jul 1997), 931--980.

\bibitem{Otto98}
{\sc Otto, F.}
\newblock Lubrication approximation with prescribed nonzero contact angle.
\newblock {\em Comm. Partial Differential Equations 23}, 11-12 (1998),
  2077--2164.

\bibitem{Otto01}
{\sc Otto, F.}
\newblock The geometry of dissipative evolution equations: the porous medium
  equation.
\newblock {\em Comm. Partial Differential Equations 26}, 1-2 (2001), 101--174.

\bibitem{Pattle59}
{\sc Pattle, R.~E.}
\newblock Diffusion from an instantaneous point source with a
  concentration-dependent coefficient.
\newblock {\em Quart. J. Mech. Appl. Math. 12\/} (1959), 407--409.

\bibitem{Peterson}
{\sc Petersen, P.}
\newblock {\em Riemannian geometry}, vol.~171 of {\em Graduate Texts in
  Mathematics}.
\newblock Springer-Verlag, New York, 1998.

\bibitem{Rudin}
{\sc Rudin, W.}
\newblock {\em Functional analysis}, second~ed.
\newblock International Series in Pure and Applied Mathematics. McGraw-Hill
  Inc., New York, 1991.

\bibitem{Schmudgen}
{\sc Schm{\"u}dgen, K.}
\newblock {\em Unbounded self-adjoint operators on {H}ilbert space}, vol.~265
  of {\em Graduate Texts in Mathematics}.
\newblock Springer, Dordrecht, 2012.

\bibitem{Seis13}
{\sc Seis, C.}
\newblock Long-time asymptotics for the porous medium equation: {T}he spectrum
  of the linearized operator.
\newblock {\em J. Differential Equations 256}, 3 (2014), 1191--1223.

\bibitem{Sjogren97}
{\sc Sj{\"o}gren, P.}
\newblock Operators associated with the {H}ermite semigroup---a survey.
\newblock In {\em Proceedings of the conference dedicated to {P}rofessor
  {M}iguel de {G}uzm\'an ({E}l {E}scorial, 1996)\/} (1997), vol.~3,
  pp.~813--823.

\bibitem{SmythHill88}
{\sc Smyth, N., and Hill, J.}
\newblock Higher order nonlinear diffusion.
\newblock {\em IMA J. Appl. Math. 40\/} (1988), 73--86.

\bibitem{Thangavelu93}
{\sc Thangavelu, S.}
\newblock {\em Lectures on {H}ermite and {L}aguerre expansions}, vol.~42 of
  {\em Mathematical Notes}.
\newblock Princeton University Press, Princeton, NJ, 1993.
\newblock With a preface by Robert S. Strichartz.

\bibitem{Vazquez03}
{\sc V{\'a}zquez, J.~L.}
\newblock Asymptotic beahviour for the porous medium equation posed in the
  whole space.
\newblock {\em J. Evol. Equ. 3}, 1 (2003), 67--118.
\newblock Dedicated to Philippe B{\'e}nilan.

\bibitem{Vazquez07}
{\sc V{\'a}zquez, J.~L.}
\newblock {\em The porous medium equation}.
\newblock Oxford Mathematical Monographs. The Clarendon Press Oxford University
  Press, Oxford, 2007.
\newblock Mathematical theory.

\bibitem{Villani03}
{\sc Villani, C.}
\newblock {\em Topics in optimal transportation}, vol.~58 of {\em Graduate
  Studies in Mathematics}.
\newblock American Mathematical Society, Providence, RI, 2003.

\bibitem{ZeldovicKompaneec50}
{\sc Zel{\cprime}dovi{\v{c}}, Y.~B., and Kompaneec, A.~S.}
\newblock On the theory of propagation of heat with the heat conductivity
  depending upon the temperature.
\newblock In {\em Collection in honor of the seventieth birthday of academician
  {A}. {F}. {I}offe}. Izdat. Akad. Nauk SSSR, Moscow, 1950, pp.~61--71.

\end{thebibliography}
\bibliographystyle{acm}
\end{document}